\newtheorem{theorem}{Theorem}
\newtheorem{proposition}[theorem]{Proposition}
\newtheorem{lemma}[theorem]{Lemma}
\newtheorem{corollary}[theorem]{Corollary}
\newtheorem{example}{Example}
\newcommand{\C}{{\mathbf C}}
\newcommand{\R}{{\mathbf R}}
\newcommand{\Z}{{\mathbf Z}}
\newcommand{\Hom}{{\rm {Hom}}}
\newcommand{\rk}{{\sf {rk}}}
\begin{document}

\title[Homology of telescopic linkages]
 {Homology of planar telescopic linkages}

\author{Michael Farber and Viktor Fromm}        
\date{\today}          

\address{Department of Mathematical Sciences, University of Durham, UK}

\email{Michael.Farber@durham.ac.uk}
\email{viktor.fromm@durham.ac.uk}



\subjclass{Primary 57N65; Secondary 68T40}

\keywords{Betti numbers, homology, linkage, configuration space, telescopic leg.}

\dedicatory{}


\begin{abstract}
We study topology of configuration spaces of planar linkages having one leg of variable length.
Such telescopic legs are common in modern robotics where they are used for shock absorbtion and serve
a variety of other purposes.
Using a Morse theoretic technique, we compute explicitly, in terms of the metric data, the Betti numbers of configuration spaces of these mechanisms.
\end{abstract}

\maketitle

\section {Introduction}
A planar linkage is a mechanism shown on Figure \ref{fig1}; it consists of several bars of fixed length connected by revolving joints forming a closed polygonal chain; the positions of two adjacent vertices are fixed but the other vertices are free to move in the plane.
\begin{figure}[t]
\begin{center}
\resizebox{6cm}{4.5cm}{\includegraphics[115,391][431,608]{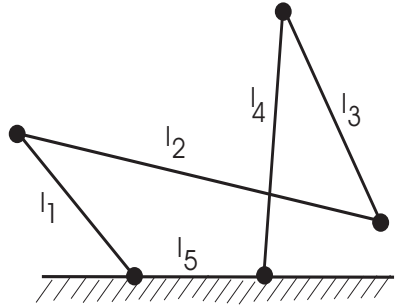}}
\end{center}
\caption{A planar linkage.}\label{fig1}
\end{figure}
The configuration space of a planar linkage depends on the bar lengths $\ell_1, \dots, \ell_n$ and is generically a closed smooth manifold of dimension $n-3$ where $n$ is the number of bars in the mechanism. For some special collections of bar lengths
$\ell_1, \dots, \ell_n$ the configuration space is a compact manifold with finitely many singular points, see for instance \cite{Fa3}.

Configuration spaces of planar linkages appear also as moduli spaces of shapes of planar $n$-gons with prescribed side lengths. These same manifolds emerge in statistical shape theory \cite{KBC}; they also describe spaces of stable and semi-stable configurations of labeled points on the projective plane which play an important role in algebraic geometry and mathematical physics.

Mathematical study of linkages and more general mechanisms has
a long history going back to the Middle Ages. Engineering discoveries
involving linkages played an important role in the industrial revolution.
Topological theory of linkages was initiated by W. Thurston,
his students and collaborators.
Kevin Walker \cite{Wa} in his 1985 Princeton undergraduate thesis gives
an amazingly deep picture of configuration spaces of linkages.
A. A. Klyachko
\cite{Kl} used methods of algebraic geometry to find an explicit expression for the Betti numbers of
configuration spaces of linkages in $\R^3$. Betti numbers of planar linkages were fully described in \cite{Fa2}; the result of \cite{Fa2} covers also the non-generic cases.
Significant progress in topology of linkages was made by J.-
Cl. Hausmann, A. Knutson, M. Kapovich and J. Millson \cite{Hau1, Hau2} and  \cite{Ka1}.
Non-generic
polygon spaces were independently studied by the Japanese
school (see, e.g. \cite{Kam1}).

Monograph \cite{Fa3} contains a detailed exposition of the topology of linkages.
We also refer the reader to the book \cite{DR} providing a wealth of information about linkages and their applications in engineering.

In this paper we study a planar mechanism which is slightly more general than the usual planar linkage. Namely, we assume that there are $n$ bars connected cyclically as shown on Figure \ref{fig1} and all bars except one have constant lengths; however the remaining bar is assumed to be {\it telescopic}, i.e. its length may vary in a prescribed interval $[a, b]$ where $a\le b$. Telescopic legs are quite common in modern robotics; they serve many practical purposes, for example they are used for shock absorbtion.

The subject of this article, besides its obvious importance for the theory of mechanisms and for the control theory, carries special charm of
vigorous interplay of tools belonging to very different branches of mathematics: topology of manifolds (in particular, Morse theory),
group actions, and combinatorics. Symmetry enters the game in the form of various involutions which are important as they imply perfectness of certain Morse functions (see \cite{Fa2}, \cite{Fa3}). The crucial role plays combinatorics of short and long subsets leading to a decomposition of the simplex of length parameters into chambers encoding the topological types of generic configuration spaces of linkages \cite{FHS}.

\section{Configuration space of linkage with telescopic leg}\label{sec2}

In order to give a formal definition of the configuration space of a linkage with one telescopic leg consider the following continuous map
\begin{eqnarray}\label{eff}
F: \C^n \to \R^n, \quad F(z_1, z_2, \dots, z_n) = (\ell_1, \ell_2, \dots, \ell_n),
\end{eqnarray}
where
\begin{eqnarray}\label{two}
\ell_i= |z_{i+1}-z_{i}|, \quad i=1, \dots, n.
\end{eqnarray}
The indices in (\ref{two}) are understood cyclically modulo $n$, i.e. $z_{n+1}=z_1$.
Let $E(2)$ denote the group of orientation preserving isometries of the plane $\C=\R^2$. The
map $F$ is invariant under the diagonal action of
$E(2)$ on $\C^n$.
If $\ell=(\ell_1, \ell_2, \dots, \ell_n)\in \R^n$ is a prescribed length vector, $\ell_i>0$, then
\begin{eqnarray}\label{link}
M_\ell = F^{-1}(\ell)/E(2)
\end{eqnarray}
is the moduli space of shapes of planar $n$-gons with sides having lengths $\ell_1, \dots, \ell_n$.

Let us now assume that we have two length vectors $\ell^\pm =(\ell^\pm_1, \dots, \ell^\pm_n)$ where
$$\ell^-_j=\ell^+_j=\ell_j>0 \quad \mbox{for all $j\in \{1, \dots, n-1\}$}$$ and $$ 0<\ell^-_n<\ell^+_n.$$ Here
$n$ is the index corresponding to the telescopic leg: we assume that the length of the $n$-th bar is not fixed but is variable in the segment\footnote{In this paper we always assume that the lower bound for the length of the telescopic leg is positive, $\ell^-_n>0$, and we do not allow $\ell_n^-=0$.}
 $[\ell_n^-, \ell_n^+]$. We consider the interval of length vectors
$A\subset \R^n$ which is parallel to the $n$-th axis and connects the vectors $\ell^-$ and $\ell^+$:
 $$A=\{\ell=(\ell_1, \dots, \ell_n); \, \ell_j^-\le \ell_j \le \ell_j^+, \, 1\le j\le n\}.$$ The configuration space of a linkage with a telescopic leg is defined similarly to (\ref{link}) as
\begin{eqnarray}\label{link1}
M_A = F^{-1}(A)/E(2).
\end{eqnarray}
The symbol $A$ in the notation $M_A$ can be viewed as representing all metric data of a telescopic linkage.

We will say that a metric data $A$ as above is {\it generic} if
$$\sum_{j=1}^n \epsilon_j \ell_j^-\not= 0, \quad \mbox{and}\quad \sum_{j=1}^n \epsilon_j \ell_j^+\not= 0$$
for any choice of coefficients $\epsilon_j=\pm 1$.

\begin{proposition}\label{lm1} If $A$ is generic then $M_A$ is a smooth compact orientable manifold with boundary and  $\dim M_A = n-2$. The boundary of $M_A$ is a disjoint union of the manifolds $M_{\ell^-}$ and $M_{\ell^+}$.
\end{proposition}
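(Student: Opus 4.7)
The plan is to realize $M_A$ as the preimage of a closed interval under a smooth function on a torus, and then to invoke the regular value theorem for manifolds with boundary.

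First I would parametrize. Write $z_{j+1}-z_j=\ell_j u_j$ with $u_j\in S^1\subset\C$ for $j=1,\dots,n-1$, and fix $z_1=0$ to kill the translation part of $E(2)$. Setting $W=(S^1)^{n-1}$, the residual diagonal rotation action of $S^1\subset E(2)$ on $W$ is free, so $T:=W/S^1$ is a smooth compact orientable manifold of dimension $n-2$, diffeomorphic to $(S^1)^{n-2}$. Since $z_n-z_1=\sum_{j=1}^{n-1}\ell_j u_j$, the map
\[
\mu\colon T\to\R_{\ge 0},\qquad \mu\bigl([u_1,\dots,u_{n-1}]\bigr)=\Bigl|\sum_{j=1}^{n-1}\ell_j u_j\Bigr|
\]
is well defined and yields a natural identification
\[
M_A\;\cong\;\mu^{-1}\bigl([\ell_n^-,\ell_n^+]\bigr),\qquad M_{\ell^\pm}\;\cong\;\mu^{-1}(\ell_n^\pm).
\]

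Next I would locate the critical values of $\mu$. Because $\ell_n^->0$ we stay away from $\mu=0$, and it is convenient to compute with the smooth function $\mu^2$ instead. A short calculation, either in real coordinates or by differentiating $|Z|^2$ with $Z=\sum_j\ell_j u_j$, shows that a point of $W$ is critical for $\mu^2$ iff each $\ell_j u_j$ is a real multiple of $Z$, i.e.\ iff the arm is collinear. Such configurations have the form $u_j=\epsilon_j u$ with signs $\epsilon_j\in\{\pm 1\}$ and $u\in S^1$, with critical value $\bigl|\sum_{j=1}^{n-1}\epsilon_j\ell_j\bigr|$. Hence the set of critical values of $\mu$ on $T$ is exactly $\bigl\{\bigl|\sum_{j=1}^{n-1}\epsilon_j\ell_j\bigr|:\epsilon_j=\pm 1\bigr\}$.

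Now I would invoke genericity. The hypothesis $\sum_{j=1}^{n}\epsilon_j\ell_j^\pm\neq 0$ for every sign choice is equivalent to
\[
\ell_n^\pm\;\ne\;\Bigl|\sum_{j=1}^{n-1}\epsilon_j\ell_j\Bigr|\quad\text{for every choice of }\epsilon_j,
\]
so $\ell_n^-$ and $\ell_n^+$ are regular values of $\mu$. The regular value theorem for manifolds with boundary then asserts that $M_A=\mu^{-1}([\ell_n^-,\ell_n^+])$ is a smooth codimension-$0$ submanifold with boundary of $T$, of dimension $n-2$, whose boundary is $\mu^{-1}(\ell_n^-)\sqcup\mu^{-1}(\ell_n^+)=M_{\ell^-}\sqcup M_{\ell^+}$. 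Compactness is inherited from the compact torus $T$, and orientability is inherited because any codimension-$0$ submanifold with boundary of an orientable manifold is orientable.

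The only place where the genericity hypothesis enters is the computation of the critical values of $\mu$ and the verification that $\ell_n^\pm$ avoid them; I expect this critical-point analysis (recognizing that the collinear configurations are exactly the critical points of $|Z|^2$ on the torus) to be the main technical step, while the remaining assertions follow from standard transversality and the orientability of the torus.
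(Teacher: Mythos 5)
Your proof is correct and follows essentially the same route as the paper: the paper also identifies $M_A$ with $f^{-1}[a,b]$ for the robot-arm function $f=-\bigl|\sum_{i=1}^{n-1}\ell_i u_i\bigr|^2$ on the torus $W\simeq T^{n-2}$, observes that the critical points are the collinear configurations so the critical values are $-\bigl|\sum\epsilon_i\ell_i\bigr|^2$, and concludes via genericity that the endpoints are regular values, with orientability inherited from the torus. Your use of $\mu$ in place of $-\mu^2$ is an immaterial difference.
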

The proof is given in section \S \ref{proofs}.

For $1\le j\le n$ let $H_{j}\subset \C^n$ denotes the hyperplane $z_j=z_{j+1}$ (note that the hyperplane $H_n$ is given by the equation $z_n=z_1$). The map $F$ (see equation (\ref{eff})) is smooth when restricted onto the complement $$X=\C^n-\cup_{j}H_{j}.$$
It is well-known that the critical points of $F|X$ are collinear configurations, i.e. the collections $(z_1, \dots, z_n)\in \C^n$ such that the points $z_1, \dots, z_n$ lie on an affine real line $L$ in $\C$. The critical values of $F|X$ are vectors $(\ell_1, \dots, \ell_n)$ corresponding to
collinear configurations. If $(z_1, \dots, z_n)$ is a collinear configuration lying on an affine real line $L$ then $z_i-z_{i+1} = \epsilon_i\ell_iv$, where $\ell_i=|z_i-z_{i+1}|$, $v$
is a fixed unit vector parallel to $L$ and $\epsilon_i=\pm 1$. Then $\sum_{j=1}^n \epsilon_j\ell_j=0$ and thus the set of critical values of $F|X$ equals
$$\left(\bigcup_J S_J\right) \bigcap \R^n_+.$$
Here the symbol $J$ runs over all proper subsets $J\subset \{1, \dots, n\}$ and $S_J\subset \R^n$ denotes the hyperplane
$$\sum_{j\in J}\ell_j = \sum_{j\notin J}\ell_j.$$

For a length vector $\ell=(\ell_1, \dots, \ell_n) \in \R^n_+$ we denote by $[\ell]$ the number
\begin{eqnarray}\label{five}
[\ell] = \min \left(\sum_{i=1}^n \epsilon_i \ell_i\right)\end{eqnarray}
where for $i=1, \dots, n$ the numbers $\epsilon_i=\pm 1$ are such that
$\sum_{i=1}^n \epsilon_i \ell_i\ge 0$.
Clearly $[\ell]$ is a measure of \lq\lq genericity\rq\rq of the vector $\ell$; indeed, $[\ell]\not=0$ if and only if $\ell$ is generic.

\begin{proposition}\label{cor2} Consider a telescopic linkage with generic metric data $A$ consisting of numbers $\ell_1, \dots, \ell_{n-1}$ and parameters of the telescopic leg $\ell^-_n<\ell_n^+$. Suppose that the difference $\ell_n^+-\ell_n^-$ satisfies
\begin{eqnarray}\label{ineq3}
\ell^+_n-\ell_n^-< [\ell^-].\end{eqnarray}
Then
$M_A$ is diffeomorphic to the Cartesian product
$$M_A\simeq M_{\ell^-}\times [0,1],$$
where $\ell^-=(\ell_1, \dots, \ell_{n-1}, \ell_n^-)$.
\end{proposition}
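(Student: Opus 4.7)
The strategy is to use the hypothesis $\ell_n^+-\ell_n^-<[\ell^-]$ to show that the segment $A\subset\R^n_+$ meets no critical hyperplane $S_J$ of $F|_X$, and then apply the Ehresmann fibration theorem (for manifolds with boundary) to the induced projection $\pi\colon M_A\to A$; the resulting trivialization over the contractible interval $A$ gives $M_A\cong M_{\ell^-}\times[0,1]$.

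First I would verify the key inequality. For any $\ell\in A$ one has $\ell_j=\ell_j^-$ for $j<n$ and $\ell_n\in[\ell_n^-,\ell_n^+]$, so for each sign vector $\epsilon=(\epsilon_1,\dots,\epsilon_n)\in\{\pm 1\}^n$,
$$\sum_{i=1}^n\epsilon_i\ell_i \;=\; \sum_{i=1}^n\epsilon_i\ell_i^- \;+\; \epsilon_n(\ell_n-\ell_n^-).$$
By the definition of $[\ell^-]$ and the genericity of $\ell^-$, the first summand has absolute value at least $[\ell^-]$, while the second is bounded in absolute value by $\ell_n^+-\ell_n^-<[\ell^-]$. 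Hence the total is nonzero, so $\ell$ lies on no hyperplane $S_J$. Since every coordinate on $A$ is positive, $F^{-1}(A)\subset X$ as well.

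Next I would pass to the quotient. Because the critical points of $F|_X$ are exactly the collinear configurations and their images fill $\bigcup_J S_J$, the restriction $F\colon F^{-1}(A)\to A$ is a smooth submersion with no critical points. The $E(2)$-action on $X$ is free and proper (every configuration in $X$ contains two distinct points, and a nontrivial orientation-preserving planar isometry fixes at most one point), so descending through the quotient yields a smooth submersion $\pi\colon M_A\to A$ without critical points. By Proposition \ref{lm1}, $M_A$ is a compact manifold with boundary and $\pi^{-1}(\ell^\pm)=M_{\ell^\pm}$.

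Finally, to trivialize the fibration, I would lift the coordinate vector field $\partial/\partial\ell_n$ on $A$ to a smooth vector field $V$ on $M_A$ using local submersion charts and a partition of unity; compactness of $M_A$ makes $V$ complete, and its flow sends $M_{\ell^-}$ diffeomorphically onto each level set of $\pi$, giving $M_A\cong M_{\ell^-}\times[0,1]$. The only genuinely nontrivial ingredient is the inequality in the second paragraph: it repackages the hypothesis as the geometric statement that $A$ lies in a single chamber of the hyperplane arrangement $\{S_J\}$. Once that is established, the rest is a routine application of Ehresmann's theorem for manifolds with boundary.
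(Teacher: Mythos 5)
Your proof is correct and follows essentially the same route as the paper: the substantive step in both is verifying that the hypothesis $\ell_n^+-\ell_n^-<[\ell^-]$ forces the segment $A$ to miss every wall $S_J$, which you establish by a direct estimate equivalent to the paper's argument by contradiction. The Ehresmann-type trivialization over the interval, which you spell out, is exactly the standard step the paper leaves implicit with ``the statement follows once we know\dots''.
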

\begin{proof} The statement of Proposition \ref{cor2} follows once we know that inequality (\ref{ineq3}) guarantees that the interval of length vectors $$\{(\ell_1, \dots, \ell_n); \, \ell_n^-\le \ell_n\le \ell_n^+\}\subset \R^n$$ does not cross any of the hyperplanes $S_J$, see above.
Suppose that for some $\ell\in A$ the length vector $\ell$ lies in $S_J$. Then $\sum_{i=1}^n \epsilon_i \ell_i=0$ where $\epsilon_i=1$ if $i\in J$ and $\epsilon_i=-1$ if $i\notin J$. Without loss of generality we may assume that $\epsilon_n=1$. Then one has
$$\sum_{i=1}^{n-1} \epsilon_i\ell_i +\ell_n^-<0, \quad \sum_{i=1}^{n-1} \epsilon_i\ell_i +\ell_n^+>0.$$
It follows that $$\sum_{i=1}^{n-1}\epsilon_i\ell_i +\ell_n^-\le -[\ell^-]$$ and
$$0<(\ell_n^+-\ell_n^-)+\left(\ell_n^- +\sum_{i=1}^{n-1}\epsilon_i\ell_i\right) \le (\ell^+_n-\ell^-_n)-[\ell^-],$$
contradicting (\ref{ineq3}).
\end{proof}



By symmetry, one may always assume
without loss of generality that
\begin{eqnarray}\label{usual}
\ell_1\le \ell_2\le \dots\le \ell_{n-1}.
\end{eqnarray}
However the interval $[\ell^-_n, \ell^+_n]$ may interact with the sequence of numbers (\ref{usual}) in various ways.
\begin{proposition}\label{empty} Under the condition (\ref{usual}) the manifold $M_A$ is nonempty if and only if the intervals $[\ell^-_n, \ell^+_n]$ and $[r, R]$ have a nonempty intersection. Here
$R=\ell_1+\dots+\ell_{n-1}$ and $r=\ell_{n-1}-\ell_1-\dots -\ell_{n-2}$.
\end{proposition}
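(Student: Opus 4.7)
The plan is to reduce the question to the classical polygon-inequality criterion for ordinary linkages, then exploit the ordering hypothesis \eqref{usual} to turn that criterion into an explicit condition on $\ell_n$.

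First I would note that $M_A=\bigcup_{\ell\in A}M_\ell$ (union inside $F^{-1}(A)/E(2)$), so $M_A$ is nonempty if and only if there exists some $\ell\in A$ for which the ordinary moduli space $M_\ell$ is nonempty. Then I would invoke the classical fact that for $\ell=(\ell_1,\dots,\ell_n)$ with $\ell_i>0$, $M_\ell\neq\emptyset$ if and only if the polygon inequalities
\[
\ell_i\le \sum_{j\neq i}\ell_j\quad\text{for all }i=1,\dots,n
\]
hold. This is equivalent to the single inequality $\max_i\ell_i\le \tfrac{1}{2}\sum_j\ell_j$, i.e.\ the largest bar is bounded by the sum of the others.

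Next I would fix $\ell_1\le\dots\le\ell_{n-1}$ and analyse how the polygon inequality behaves as $\ell_n$ varies. There are two cases according to whether $\ell_n$ dominates $\ell_{n-1}$ or not. If $\ell_n\le\ell_{n-1}$, then $\ell_{n-1}=\max_i\ell_i$ and the binding polygon inequality is $\ell_{n-1}\le\ell_1+\dots+\ell_{n-2}+\ell_n$, which rearranges to $\ell_n\ge r$; all other inequalities follow automatically. If $\ell_n>\ell_{n-1}$, then $\ell_n=\max_i\ell_i$ and the only binding inequality is $\ell_n\le \ell_1+\dots+\ell_{n-1}=R$. Combining the two cases shows that, for $\ell_n>0$, the space $M_{(\ell_1,\dots,\ell_n)}$ is nonempty if and only if $\ell_n\in[\max(r,0),R]$.

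Finally, since the paper assumes $\ell_n^->0$, the set $\{\ell_n>0 : M_\ell\neq\emptyset\}$ meets $[\ell_n^-,\ell_n^+]$ exactly when $[\ell_n^-,\ell_n^+]\cap[r,R]\neq\emptyset$ (the positivity constraint is absorbed by $\ell_n^->0$ and the fact that $r\le\ell_{n-1}\le R$). This gives the stated equivalence. The only possibly delicate point is verifying that when $r\le 0$ the condition $[\ell_n^-,\ell_n^+]\cap[r,R]\neq\emptyset$ still matches the genuine nonemptiness criterion, but this is immediate because $\ell_n^+\ge\ell_n^->0\ge r$ is automatic there, so the condition reduces to $\ell_n^-\le R$, which is precisely what the polygon inequality demands. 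No real obstacle is expected; the argument is essentially a careful bookkeeping of polygon inequalities under the chosen ordering.
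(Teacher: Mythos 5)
Your proposal is correct and rests on the same classical fact as the paper's proof: the set of distances $|z_n-z_1|$ realizable by the open chain with bars $\ell_1,\dots,\ell_{n-1}$ is exactly $[\max(r,0),R]$, which the paper obtains directly from the triangle inequality (and the realizability of every $\rho$ in that range), while you repackage it as the closed-polygon inequality $2\max_i\ell_i\le\sum_j\ell_j$ plus casework on whether $\ell_n$ or $\ell_{n-1}$ is the longest bar. The extra bookkeeping for $r\le 0$ is handled correctly and the argument goes through.
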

\begin{proof} Assume that $M_A\not=\emptyset$, and let $(z_1, \dots, z_n)$ be a configuration with $|z_{i+1}-z_i|= \ell_i$ for $i=1, \dots, n-1$ and
$$\ell_n^-\le |z_n-z_1|\le \ell_n^+.$$ Then clearly, using the triangle inequality,
$$\ell_{n-1}-\ell_1-\dots -\ell_{n-2} \le |z_n-z_1|\le \ell_1+ \dots+\ell_{n-1}.$$
Hence $[\ell_n^-, \ell_n^+]\cap [r,R]\not=\emptyset$.

Conversely, suppose that $\rho\in [\ell_n^-, \ell_n^+]\cap [r,R]$. Then there exists a configuration of points $(z_1, \dots, z_n)\in \C^n$ such that
$|z_{i+1}-z_i|= \ell_i$ for $i=1, \dots, n-1$ and $|z_n-z_1|=\rho.$ Hence $M_A\not=\emptyset.$
\end{proof}

\section{Betti numbers of $M_A$}

In this section we state the main theorem of this paper which gives explicitly the Betti numbers of manifolds $M_A$.

Recall that $A$ denotes the metric data of the telescopic linkage consisting of two vectors $\ell^+, \ell^-\in \R^n_+$ which have all coordinates equal $\ell^+_i=\ell^-_i=\ell_i>0$ for all $i<n$ and $\ell^+_n>\ell^-_n>0$. In other words the telescopic leg corresponds to the $n$-th coordinate.
We will also assume the inequalities (\ref{usual}).

Before stating our main result we have to define some combinatorial quantities.
For a subset $J\subset \{1, \dots, n\}$ one denotes by $\epsilon_J=(\epsilon_1, \dots, \epsilon_n) \in \R^n$ the vector having
coordinates $\epsilon_i=1$ if $i\in J$ and $\epsilon_i=-1$ if $i\notin J$. One may view the vectors $\epsilon_J$ for various $J$ as vertexes of the unit cube
$C=[-1,1]^n\subset \R^n$.

Given $\ell\in \R^n_+$ and an integer $k = 0, 1, \dots, n-2$ we denote by $\alpha_k(\ell)$ the number of subsets $J\subset \{1, \dots, n-1\}$ of cardinality
$|J|=n-k-1$ such that $\langle \ell, \epsilon_J\rangle >0$. The last inequality may also be expressed by saying\footnote{According to a well established terminology a subset $J\subset \{1, \dots, n\}$ is called long with respect to a length vector $\ell$ if $\langle \epsilon_J, \ell\rangle >0$. A subset $J$ is called short with respect to $\ell$ if its complement is long.} that \lq\lq $J$ is long with respect to $\ell$\rq\rq.

Passing to complements, we see that $\alpha_k(\ell)$ equals the number of $k+1$ element subsets of the index sets $\{1, \dots, n\}$ which contain $n$ (the index of the telescopic leg),  and are
short with respect to $\ell$.

Given two vectors $\ell^+, \ell^-\in \R_+^n$ with $\ell^+_i=\ell^-_i=\ell_i$ for $i=1, \dots, n-1$ and an integer $k=0, \dots, n-2$, we denote by $\beta_k(\ell^+, \ell^-)$ the number of subsets $J\subset \{1, \dots, n-2\}$ of cardinality $|J|= n-k-2$ such that
\begin{eqnarray}
\langle \ell^+, \, \epsilon_{J'}\rangle <0, \quad \mbox{and}\quad \langle \ell^- ,\, \epsilon_{{J''}}, \rangle >0
\end{eqnarray}
where $J'=J\cup \{n\}$ and $J''=J\cup \{n-1\}$. In other words, $J'$ is short with respect to $\ell^+$ and $J''$ is long with respect to $\ell^-$.

Each subset $J$ as above determines a subset $K\subset \{1, \dots, n\}$ (the complement of $J''$ in $\{1, \dots, n\}$) which has the following properties:
\begin{enumerate}
\item[(a)] $|K|=k+1$;
\item[(b)] $n\in K$ and $n-1\notin K$;
\item[(c)] $K$ is short with respect to $\ell^-$;
\item[(d)] The set $K'$ obtained from $K$ by removing $n$ and adding $n-1$ is long with respect to $\ell^+$.
\end{enumerate}

\noindent Clearly $\beta_k(\ell^+, \ell^-)$ equals the number of subsets $K$ satisfying (a) - (d).

Note the following symmetry property:
\begin{eqnarray}\label{dual}
\beta_k(\ell^+, \ell^-) = \beta_{n-2-k}(\ell^-, \ell^+),
\end{eqnarray}
which follows by passing to complements of subsets and adding $n$, i.e. by considering the map $K\mapsto \bar K\cup\{n\}$. Next we observe that
\begin{eqnarray}
\alpha_k(\ell^-) \ge \beta_k(\ell^+, \ell^-).
\end{eqnarray}
We also mention the following property:
\begin{lemma} Assume that the average length of the telescopic leg
is longer than any other leg of the linkage, i.e.
\begin{eqnarray}
\frac{\ell_n^++\ell_n^-}{2} \ge \ell_{n-1}.
\end{eqnarray}
Then $\beta_k(\ell^+, \ell^-) =0$ for all $k$.
\end{lemma}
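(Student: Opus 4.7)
The plan is to unpack the combinatorial conditions (a)--(d) characterising the subsets $K$ counted by $\beta_k(\ell^+,\ell^-)$ into two linear inequalities, and then show that these inequalities are jointly inconsistent whenever $\tfrac{\ell_n^+ + \ell_n^-}{2} \ge \ell_{n-1}$.

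First I would introduce the bookkeeping variable $S = K \cap \{1,\dots,n-2\}$. Since condition (b) forces $n\in K$ and $n-1\notin K$, we have $K = S \cup \{n\}$ and $K' = S \cup \{n-1\}$. Setting
\begin{equation*}
a = \sum_{i\in S}\ell_i, \qquad b = \sum_{i\in\{1,\dots,n-2\}\setminus S}\ell_i,
\end{equation*}
conditions (c) and (d) become, respectively,
\begin{equation*}
a + \ell_n^- \;<\; b + \ell_{n-1}, \qquad a + \ell_{n-1} \;>\; b + \ell_n^+.
\end{equation*}

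The second step is to rewrite these two inequalities as a double bound on $a-b$:
\begin{equation*}
\ell_n^+ - \ell_{n-1} \;<\; a - b \;<\; \ell_{n-1} - \ell_n^-.
\end{equation*}
The existence of any real number satisfying this chain forces the lower bound to be strictly less than the upper bound, which rearranges to $\ell_n^+ + \ell_n^- < 2\ell_{n-1}$, i.e.\ $\tfrac{\ell_n^+ + \ell_n^-}{2} < \ell_{n-1}$. Contrapositively, under the hypothesis $\tfrac{\ell_n^+ + \ell_n^-}{2} \ge \ell_{n-1}$ no subset $S \subset \{1,\dots,n-2\}$ can exist, so no $K$ satisfies (a)--(d) for any $k$, and hence $\beta_k(\ell^+,\ell^-) = 0$ for every $k$.

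The argument is purely algebraic and there is no real obstacle beyond carefully tracking signs and the meaning of ``long'' versus ``short''; the only place where one could slip is in translating the inner-product conditions $\langle\epsilon_K,\ell^-\rangle < 0$ and $\langle\epsilon_{K'},\ell^+\rangle > 0$ into inequalities on $a-b$. I would therefore present the short/long reformulation $\langle\epsilon_J,\ell\rangle = 2\sum_{i\in J}\ell_i - \sum_i\ell_i$ explicitly before isolating the terms involving $\ell_{n-1}$ and $\ell_n^{\pm}$.
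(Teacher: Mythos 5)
Your proof is correct and is essentially the paper's own argument: the authors likewise translate conditions (c) and (d) into the two linear inequalities $\ell_n^- + x < 0$ and $2\ell_{n-1} - \ell_n^+ + x > 0$ (with $x = a - b - \ell_{n-1}$ in your notation) and combine them to force $\ell_n^+ + \ell_n^- < 2\ell_{n-1}$, contradicting the hypothesis. Your version only differs by bookkeeping via $S$ and $a-b$.
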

\begin{proof} Assume that $\beta_k(\ell^+, \ell^-) \not=0$, i.e. there exists a subset $K$ satisfying (a) - (d). Denote
$$x= \sum_{i\in K, \, i\not= n}\ell_i - \sum_{i\notin K} \ell_i.$$
We have two inequalities
$\ell^-_n +x<0$
(because of (b) and (c)) and
$2\ell_{n-1}-\ell^+_n+x>0$
(because of (d)). These two inequalities imply that $2\ell_{n-1}>\ell_n^++\ell_n^-$ contradicting our assumption.
\end{proof}

The following statement is the main result of this paper.

\begin{theorem}\label{main} Let $A$ be the metric data of a telescopic linkage having legs of fixed lengths $\ell_1\le \ell_2\le \dots \le \ell_{n-1}$ and a telescopic leg of length varying
between $\ell^-_n$ and $\ell^+_n$, where $0<\ell^-_n<\ell^+_n$. Assume that the metric data $A$ is generic, see above. Then the homology group $H_k(M_A;\Z)$ is free abelian and its rank equals
\begin{eqnarray}
\alpha_k(\ell^-) - \beta_k(\ell^+, \ell^-) +\alpha_{n-3-k}(\ell^+)  - \beta_{n-3-k}(\ell^-, \ell^+)
\end{eqnarray}
for $k=0, \dots, n-2$.
\end{theorem}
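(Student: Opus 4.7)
The plan is to apply Morse theory to $M_A$ using the natural function $g\colon M_A\to[\ell^-_n,\ell^+_n]$ defined by $g([z])=|z_n-z_1|$, the length of the telescopic leg. Since $g^{-1}(\ell^\pm_n)=M_{\ell^\pm}$, this realises $M_A$ as a cobordism between its two boundary components, and the level sets $g^{-1}(t)$ over regular values are the closed polygon spaces $M_{(\ell_1,\dots,\ell_{n-1},t)}$.

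First I would verify that $g$ is Morse on the interior of $M_A$, with critical points being exactly the collinear configurations (as already noted for $F|X$ in section~\ref{sec2}). Normalising the $E(2)$-action by fixing $z_1=0$ and $z_2=\ell_1$, such configurations are in bijection with sign vectors $\eta\in\{\pm 1\}^n$ satisfying $\eta_1=+1$ and $\sum_j\eta_j\ell_j=0$. A direct second-order expansion of $g$ in the bar angles $\phi_2,\dots,\phi_{n-1}$ produces a Hessian of the form $-\sigma D+\frac{1}{|x|}ww^T$, a rank-one perturbation of a diagonal signature matrix. The matrix-determinant lemma pins down the sign of $\det(\mathrm{Hess})$ and Cauchy interlacing constrains the inertia of a rank-one perturbation, so that the Morse index works out to $|J|-1$ when $n\notin J$ and to $n-1-|J|$ when $n\in J$, where $J=\{i:\eta_i=+1\}$. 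Translating the interiority condition $|\sum_{j<n}\eta_j\ell_j|\in(\ell^-_n,\ell^+_n)$ into inequalities on the length vectors $\ell^\pm$ exhibits the critical set in terms of subsets $J$ whose short/long profiles already match the combinatorics appearing in $\alpha$ and $\beta$.

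Next I would extract $H_*(M_A)$ using the long exact sequence of the pair $(M_A,M_{\ell^-})$, combined with (i) the Morse identification of $H_*(M_A,M_{\ell^-})$ as the free complex generated by the interior critical points, (ii) the Farber formula $b_k(M_{\ell^\pm})=\alpha_k(\ell^\pm)+\alpha_{n-3-k}(\ell^\pm)$ for the boundary polygon spaces~\cite{Fa2}, and (iii) Poincar\'e--Lefschetz duality $H_k(M_A,M_{\ell^-})\cong H^{n-2-k}(M_A,M_{\ell^+})$ relating the two ends. The $\alpha$-terms in the main formula would then appear as the surviving contributions from the two boundary components (one directly, one via duality), while the $\beta$-terms would enter as the ranks of the connecting homomorphisms, counting precisely those critical subsets whose short/long signatures at both endpoints mediate a cancellation between boundary and interior generators. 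Freeness of $H_*(M_A;\Z)$ and exact additivity of ranks---needed for this assembly to work---are obtained by adapting Farber's involution-based perfectness argument from the closed polygon case, exploiting the $\Z/2$-involution $\tau\colon[z]\mapsto[\bar z]$ on $M_A$ whose fixed set is precisely the collinear critical locus, together with the orientability of $M_A$ (Proposition~\ref{lm1}).

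The principal obstacle is the combinatorial bookkeeping at the chain level, identifying the ranks of the connecting homomorphisms with $\beta_k(\ell^+,\ell^-)$ and $\beta_{n-3-k}(\ell^-,\ell^+)$ respectively. The $\beta$-counts involve an asymmetric treatment of the distinguished indices $1$, $n-1$ and $n$, and matching this structure to the attaching data of the Morse handles demands a delicate analysis. Adapting Farber's involution-based perfectness argument from the closed to the cobordism setting, where the Morse--Bott-type differentials coming from the boundary components need not vanish a priori, is a secondary but non-trivial technical point.
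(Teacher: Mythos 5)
Your overall strategy---Morse theory for the end-to-end distance, realising $M_A$ as a cobordism between $M_{\ell^-}$ and $M_{\ell^+}$---starts from essentially the same function the paper uses (there $f=-\left|\sum\ell_iu_i\right|^2$ on the arm torus $W\simeq T^{n-2}$, so that $M_A\simeq f^{-1}[a,b]$). But the proposal stops exactly where the proof has to begin. The two items you flag as obstacles, namely (1) identifying the ranks of the connecting homomorphisms with $\beta_k(\ell^+,\ell^-)$ and $\beta_{n-3-k}(\ell^-,\ell^+)$, and (2) freeness of $H_*(M_A;\Z)$ with exact additivity of ranks, are not residual bookkeeping: they are the entire content of the theorem, and nothing in your outline supplies them. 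Computing the connecting map $H_k(M_A,M_{\ell^-})\to H_{k-1}(M_{\ell^-})$ from the Morse data requires knowing how the descending manifolds of the interior critical points meet the boundary level set, and the index computation you sketch gives no access to this. The paper circumvents the problem entirely: it works with the pair $(W^b,W^{[a,b]})$ inside the ambient torus, uses excision, Poincar\'e duality and the universal coefficient theorem to identify $H_i(W^b,W^{[a,b]})$ with $\left(H_{n-2-i}(W^a)\right)^\ast$, and thereby turns $j_\ast$ into the intersection pairing of the explicit torus cycles $[W_I]$, $[W_J]$ of \cite{Fa2}. The rank of that pairing is then pinned down by the $A\oplus B\oplus C$ decomposition and the corrected classes $Y_I$ (whose orthogonality to $C^a_{i'}$ rests on a coordinate-transposition symmetry of $W$); this is where $\beta_k(\ell^+,\ell^-)$ actually enters, and the same argument shows the cokernel of $j_\ast$ is torsion free, which is what gives freeness of $H_*(M_A)$. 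An ``involution-based perfectness argument'' on $M_A$ itself does not substitute for this: the fixed-point set of $[z]\mapsto[\bar z]$ being the collinear locus does not by itself control the relative Morse differential of a cobordism with nonempty boundary.

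There is also a concrete error in one of your inputs. The formula you cite as (ii), $b_k(M_{\ell^\pm})=\alpha_k(\ell^\pm)+\alpha_{n-3-k}(\ell^\pm)$, is not the statement of \cite{Fa2}: the numbers $a_k$ there count short subsets of cardinality $k+1$ containing the index of the \emph{longest} link, whereas $\alpha_k$ as defined in this paper counts those containing the telescopic index $n$. As the paper's first example shows, $a_k(\ell)=\alpha_k(\ell)-\beta_k(\ell,\ell)$, and the correction is nonzero precisely when $\ell_n<\ell_{n-1}$---that is, in exactly the regime where the $\beta$-terms of Theorem~\ref{main} are nontrivial. Relatedly, the $\alpha$-terms in the final formula are not Betti numbers of the boundary components $M_{\ell^\pm}$ at all, but of the sublevel sets $W^a$, $W^b$ of the arm torus; conflating the two would make your long-exact-sequence assembly produce the wrong answer even if the connecting maps were known.
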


\section{Examples}

Before embarking on the proof of Theorem \ref{main} in the next section we consider a few special cases.

\begin{example} {\rm
Suppose that the numbers $\ell_n^+$ and $\ell_n^- $ are nearly equal. In this case the manifold $M_A$ is diffeomorphic to the product $M_\ell\times [0,1]$ where $M_\ell$ is the moduli space of closed linkage with length vector $\ell=(\ell_1, \dots, \ell_n)$ where $\ell_i=\ell_i^+$ for all $i$. We want to compare the statement of Theorem \ref{main} in this special case with the result of \cite{Fa2} giving Betti number of planar linkages with a fixed length vector.
It is known \cite{Fa2} that the integral homology groups of planar polygon spaces are free abelian and therefore their Betti numbers are independent of the field of coefficients.
Set $\ell^+=\ell^-= \ell$ and consider the difference
$$\alpha_k(\ell^-) -\beta_k(\ell^+, \ell^-)=\alpha_k(\ell) -\beta_k(\ell, \ell).$$ Without loss of generality we may assume that $\ell_1\le \ell_2 \le \dots \le \ell_{n-1}$ however the last coordinate $\ell_n$ (corresponding to the telescopic leg) can be arbitrary.

According to our definition, the number $\alpha_k(\ell)$ is the number of subsets of the set
$\{1, \dots, n-1\}$  which are of cardinality $n-k-1$ and are long with respect to $\ell$. Passing to complements, we see that $\alpha_k(\ell)$ equals the number of subsets
of $\{1, \dots, n\}$ of cardinality $k+1$ which  contain $n$ and
are short with respect to $\ell$.

The other number $\beta_k(\ell, \ell)$ equals the number of $J\subset \{1, \dots, n-2\}$ with $|J|=n-k-2$ such that $J'=J\cup\{n\}$ is short with respect to $\ell$ and
$J''=J\cup\{n-1\}$ is long with respect to $\ell$. Each such subset $J$ determines a subset $K\subset \{1, \dots, n\}$ (the complement of $J''$ in $\{1, \dots, n\}$) which has the following properties: (a) $|K|=k+1$; (b) $n\in K$ and $n-1\notin K$; (c) $K$ is short with respect to $\ell$; (d) The set $K'$ obtained from $K$ by removing $n$ and adding $n-1$ is long with respect to $\ell$. Clearly $\beta_k(\ell, \ell)$ equals the number of subsets $K$ satisfying properties (a) - (d).

Consider now two cases.

(I) If $\ell_{n-1}\le \ell_n$ then obviously $\beta_k(\ell, \ell)=0$ and the number $\alpha_k(\ell)$ coincides with the number $a_k(\ell)$ defined in \cite{Fa2} as the number of short subsets of cardinality $k+1$ containing the index of the longest link $n$.

(II) Assume now that $\ell_{n-1}>\ell_n$. The number $\alpha_k(\ell)$ equals the number of short subsets of cardinality $k+1$ containing $n$. The family of all subsets of cardinality $k+1$ which contain $n$ and are short with respect to $\ell$ can be represented as the union of three mutually disjoint families $$A\cup B\cup C,$$ where $A$ is the family of all subsets
$K\subset \{1, \dots, n\}$ of cardinality $k+1$ with $n-1, n\in K$ which are short with respect to $\ell$;
$B$ is the family of all subsets
$K\subset \{1, \dots, n\}$ of cardinality $k+1$ with $ n\in K$ and $n-1\notin K$ such that $K$ and $\check{K}=K-\{n\}\cup \{n-1\}$ are short with respect to $\ell$;
 $C$ is the family of all
 subsets $K\subset \{1, \dots, n\}$ of cardinality $k+1$ with $ n\in K$ and $n-1\notin K$ such that $K$ is short and
 $\check{K}=K-\{n\}\cup \{n-1\}$ is long with respect to $\ell$.

 Clearly $\beta_k(\ell, \ell)$ is exactly the cardinality of $C$. Hence the difference
$\alpha_k(\ell)-\beta_k(\ell, \ell)$ equals $a_k(\ell)$ as defined in \cite{Fa2}, the number of short subsets of cardinality $k+1$ containing $n-1$, i.e. the index of the longest link.

Thus we see that Theorem \ref{main} implies  Theorem 1 from \cite{Fa2} in the nonsingular case (note that the latter results covers also the cases when the moduli space of linkages has singularities).

}

\begin{example}{\rm
Assume that (a) $\ell_{n-1}>\ell_1+\dots+\ell_{n-2}$; (b) $\ell_n^->0$ is very small; and (c) $\ell_n^+>\ell_1+\dots+\ell_{n-1}$ is very large. Then clearly $M_A=T^{n-2}$
is the $(n-2)$-dimensional torus. To apply Theorem \ref{main} one computes the numbers $\alpha_k(\ell^-)$ and $\alpha_k(\ell^+)$. A subset $J\subset \{1, \dots, n-1\}$ is long with respect to $\ell^-$ if and only if it contains $n-1$. There are no subsets $J\subset \{1, \dots, n-1\}$ which are long with respect to $\ell^+$. Thus we obtain
$\alpha_k(\ell^-)= \binom {n-2} k$ and $\alpha_k(\ell^+)=0$. The numbers $\beta_k$ all vanish in this case. We see that the result is consistent with the fact that
$M_A=T^{n-2}$.
}\end{example}
\end{example}
\begin{example}{\rm Consider the zero-dimensional Betti number as given by Theorem \ref{main}. Analyzing the definitions given above one sees that the difference
$\alpha_0(\ell^-)-\beta_0(\ell^+, \ell^-)$ can be either $0$ or $1$ and it equals $1$ if and only if the following inequalities hold
$$\ell^-_n<\ell_1+ \dots+ \ell_{n-1}\quad \mbox{and}\quad \ell^+_n>\ell_{n-1} -\ell_1 -\ell_2-\dots-\ell_{n-2}.$$
Denoting $R=\ell_1+ \dots+ \ell_{n-1}$ and $r= \ell_{n-1} -\ell_1 -\ell_2-\dots-\ell_{n-2}$, we may express the above two inequalities equivalently as
$[\ell^-_n, \ell^+_n]\cap [r, R]\not=\emptyset.$
It follows that $\alpha_0(\ell^-)-\beta_0(\ell^+, \ell^-)$ equals one if and only if the manifold $M_A$ is nonempty, see Proposition \ref{empty}.

Note that in general the difference $\alpha_{k}(\ell^+) - \beta_{k}(\ell^-, \ell^+)$ equals the number of subsets $J\subset \{1, \dots, n-1\}$ with $|J|=n-k-1$ such that
$J$ is long with respect to $\ell^+$ and either $n-1\notin J$ or $n-1\in J$ and the set $J\cup \{n\}-\{n-1\}$ is long with respect to $\ell^-$.

Substituting $k=n-3$, we obtain that $\alpha_{n-3}(\ell^+) - \beta_{n-3}(\ell^-, \ell^+)$ equals the number of two-element subsets $J\subset \{1, \dots, n-1\}$ which are  long with respect to $\ell^+$ and either (a) $n-1\notin J$ or (b) $n-1\in J$ and the set $J\cup \{n\}-\{n-1\}$ is long with respect to $\ell^-$. If (a) occurs then clearly $J=\{n-3, n-2\}$ and $\ell_n^+\le \ell_{n-3}$;
the necessary and sufficient condition for (a) is given by the inequality
\begin{eqnarray}\label{a}
2(\ell_{n-3}+\ell_{n-2})\ge \ell_1+\dots+\ell_{n-1}+\ell_n^+.
\end{eqnarray}
We see that there may be at most one set $J$ satisfying (a).

Suppose now that (b) is satisfied. Then the subset $J$ must coincide with $\{n-2, n-1\}$ since for any other choice
$J=\{i, n-1\}$ (with $i<n-2$) we would have the sets $\{n-2, n-1\}$ and $\{i, n\}$ long and mutually disjoint with respect to $\ell^-$, which is impossible. Hence the case (b) is equivalent to the inequalities
\begin{eqnarray}\label{b1}
2(\ell_{n-2}+\ell_{n-1})\ge \ell_1+\dots+\ell_{n-1}+\ell_n^+,\end{eqnarray}
and
\begin{eqnarray}\label{b2}
2(\ell_{n-2}+\ell^-_{n})\ge \ell_1+\dots+\ell_{n-1}+\ell_n^-.\end{eqnarray}
This last inequality implies that $\{n-2, n\}$ is long with respect to $\ell^+$ which is inconsistent with $\{n-3, n-2\}$ being long with respect to $\ell^+$, i.e. with the case (a). Indeed,  if $\{n-2, n\}$ is long then $\{n-1, n\}$ is long and we obtain any subset lying in the complement of $\{n-1, n\}$ (such as $\{n-3, n-2\}$) is short.

We obtain that the cases (a) and (b) are inconsistent with each other and either of the cases is satisfied by at most one subset.

\begin{corollary}\label{cor6} The manifold $M_A$ has at most two connected components. $M_A$ is disconnected if and only if either the inequality (\ref{a}) or the
two inequalities (\ref{b1}) and (\ref{b2}) are satisfied.
\end{corollary}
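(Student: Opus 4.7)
The plan is to deduce the corollary directly from Theorem \ref{main} specialized at $k=0$, using the formula
\[
b_0(M_A) = \bigl(\alpha_0(\ell^-) - \beta_0(\ell^+,\ell^-)\bigr) + \bigl(\alpha_{n-3}(\ell^+) - \beta_{n-3}(\ell^-,\ell^+)\bigr),
\]
together with the analysis of each summand carried out in the preceding example. Since Theorem \ref{main} guarantees $H_0(M_A;\Z)$ is free abelian, $b_0(M_A)$ coincides with the number of connected components, so the corollary reduces to showing $b_0(M_A)\le 2$ and identifying when $b_0(M_A)=2$.

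First I would invoke the observation from the preceding example that $\alpha_0(\ell^-) - \beta_0(\ell^+,\ell^-)\in\{0,1\}$, with value $1$ exactly when $[\ell_n^-,\ell_n^+]\cap[r,R]\ne\emptyset$, i.e.\ by Proposition \ref{empty} exactly when $M_A\ne\emptyset$. Next I would invoke the example's identification of $\alpha_{n-3}(\ell^+) - \beta_{n-3}(\ell^-,\ell^+)$ with the number of two-element subsets $J\subset\{1,\dots,n-1\}$ that are long with respect to $\ell^+$ and satisfy either $(a)$ $n-1\notin J$ or $(b)$ $n-1\in J$ together with $J\cup\{n\}-\{n-1\}$ long with respect to $\ell^-$. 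Under the assumed ordering $\ell_1\le\cdots\le\ell_{n-1}$, case $(a)$ forces $J=\{n-3,n-2\}$ (equivalent to inequality (\ref{a})), and case $(b)$ forces $J=\{n-2,n-1\}$ (equivalent to inequalities (\ref{b1}) and (\ref{b2})), since any other choice of two-element $J$ containing $n-1$ would produce two disjoint long subsets with respect to $\ell^-$.

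The key step to seal the bound is the mutual exclusivity of $(a)$ and $(b)$, which I would argue as in the example: inequality (\ref{b2}) implies $\{n-2,n\}$ is long with respect to $\ell^+$, hence so is $\{n-1,n\}$ (using $\ell_{n-1}\ge\ell_{n-2}$), which forces every subset in its complement, in particular $\{n-3,n-2\}$, to be short with respect to $\ell^+$, contradicting $(a)$. Therefore the second difference also lies in $\{0,1\}$, so $b_0(M_A)\le 2$. Finally, if $b_0(M_A)=2$ then $M_A\ne\emptyset$ and the first difference is automatically $1$, so disconnectedness is equivalent to the second difference equalling $1$, which by the dichotomy above is equivalent to either (\ref{a}) or the conjunction of (\ref{b1}) and (\ref{b2}) being satisfied. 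The one step requiring genuine verification, rather than quotation from the example, is the mutual exclusivity of $(a)$ and $(b)$; everything else is bookkeeping around Theorem \ref{main}.
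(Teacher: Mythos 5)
Your proposal is correct and follows essentially the same route as the paper: the corollary is stated there as an immediate consequence of the preceding example's analysis, namely that $\alpha_0(\ell^-)-\beta_0(\ell^+,\ell^-)\in\{0,1\}$ (detecting nonemptiness via Proposition \ref{empty}) and that $\alpha_{n-3}(\ell^+)-\beta_{n-3}(\ell^-,\ell^+)\in\{0,1\}$ because cases (a) and (b) each admit at most one subset and are mutually exclusive, the exclusivity argument via $\{n-2,n\}$ and $\{n-1,n\}$ being exactly the one in the text. Your bookkeeping around Theorem \ref{main} at $k=0$ matches the paper's intended deduction.
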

\begin{corollary}
If $M_A$ is disconnected then for any fixed length for the $n$-th leg $\ell^-_n\le \ell_n\le \ell^+_n$, the manifold $M_\ell$
is disconnected
where $\ell=(\ell_1, \dots, \ell_{n-1}, \ell_n)$.
\end{corollary}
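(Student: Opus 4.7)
The plan is to combine Corollary~\ref{cor6} with the combinatorial expression for $b_0(M_\ell)$ distilled in Example~1 (and proved in \cite{Fa2}). That expression is $b_0(M_\ell)=1+a_{n-3}(\ell)$, where $a_{n-3}(\ell)$ counts the long $2$-element subsets of $\{1,\dots,n\}$ whose complement contains the longest link of $\ell$. The disconnectedness of $M_A$ triggers, via Corollary~\ref{cor6}, one of the two inequality alternatives (a) or (b). For each fixed $\ell_n\in[\ell_n^-,\ell_n^+]$ I intend to produce one such long $2$-subset, which will force $b_0(M_\ell)\ge 2$.

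In case (a), the inequality $2(\ell_{n-3}+\ell_{n-2})\ge R+\ell_n^+$ passes to any $\ell_n\le\ell_n^+$, so $\{n-3,n-2\}$ is long with respect to $\ell$. A brief rearrangement using $\ell_1\le\cdots\le\ell_{n-1}$ yields $\ell_n^+\le\ell_{n-3}\le\ell_{n-1}$, so the longest-link index of $\ell$ is $n-1$, disjoint from $\{n-3,n-2\}$. In case (b) I split according to whether $\ell_n\le\ell_{n-1}$ (longest-link index $n-1$) or $\ell_n>\ell_{n-1}$ (longest-link index $n$). In the first subcase, (\ref{b2}) together with $\ell_n\ge\ell_n^-$ shows that $\{n-2,n\}$ is long with respect to $\ell$ and avoids $n-1$; in the second, (\ref{b1}) together with $\ell_n\le\ell_n^+$ shows that $\{n-2,n-1\}$ is long with respect to $\ell$ and avoids $n$. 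Either way $a_{n-3}(\ell)\ge 1$.

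The main obstacle I anticipate concerns those finitely many $\ell_n\in[\ell_n^-,\ell_n^+]$ for which $\ell$ lies on some wall $S_J$ and $M_\ell$ is singular; at such values Example~1 applies only in the extended form of Theorem~1 from \cite{Fa2}, which treats non-generic length vectors and still yields $b_0(M_\ell)=1+a_{n-3}(\ell)$. With that cited, the long $2$-subsets exhibited above witness the splitting of $M_\ell$ for every $\ell_n$ in the interval.
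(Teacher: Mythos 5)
Your argument is correct, and at its core it does the same combinatorial work as the paper's (implicit) proof: the paper leaves this corollary without a written argument, but the intended route is visible in Corollary \ref{cor9} and its proof --- the alternatives (\ref{a}) and (\ref{b1})--(\ref{b2}) of Corollary \ref{cor6} produce a triple of indices whose three two-element subsets are long with respect to $\ell=(\ell_1,\dots,\ell_{n-1},\ell_n)$ for \emph{every} $\ell_n\in[\ell_n^-,\ell_n^+]$ (using exactly the monotonicity you use: a long pair not containing $n$ stays long as $\ell_n$ decreases from $\ell_n^+$, a long pair containing $n$ stays long as $\ell_n$ increases from $\ell_n^-$), and then the classical criterion of \cite{J}, \cite{LW} and Theorem 1 of \cite{Ka1} gives disconnectedness of each $M_\ell$. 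You replace that last citation by the formula $b_0(M_\ell)=a_0(\ell)+a_{n-3}(\ell)$ from \cite{Fa2}, which buys you nothing extra but costs a little: you must identify the longest-link index in each case (hence your subcases in (b), which the rigid-triple route avoids, since the third long pair comes for free from $\ell_m\ge\ell_i$), and you must know $a_0(\ell)=1$, i.e.\ $M_\ell\ne\emptyset$ --- a point you do not address, though it follows immediately from your own witness: since the exhibited two-element subset is long and lies in the complement of every singleton, no singleton can be long. Your handling of the non-generic intermediate values of $\ell_n$ via the full Theorem 1 of \cite{Fa2} is fine (the extra ``median'' contribution there is nonnegative, so $b_0\ge a_0+a_{n-3}\ge 2$ persists), and note that the long pairs you exhibit are strictly long throughout the closed interval because $\ell^\pm$ are generic.
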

Recall that $M_\ell$ is defined as the moduli space of shapes of all closed planar $n$-gons with sides of lengths $\ell_1, \dots, \ell_n$.

\begin{corollary} If either $M_{\ell^+}$ or $M_{\ell^-}$ is connected then $M_A$ is connected.
\end{corollary}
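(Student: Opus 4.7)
The plan is to obtain this corollary as an immediate contrapositive of the preceding corollary. That result asserts: if $M_A$ is disconnected, then for every admissible value $\ell_n \in [\ell_n^-, \ell_n^+]$ the polygon space $M_\ell$ associated with the length vector $\ell = (\ell_1, \dots, \ell_{n-1}, \ell_n)$ is disconnected. Specializing to the two endpoints $\ell_n = \ell_n^-$ and $\ell_n = \ell_n^+$, one concludes that disconnectedness of $M_A$ forces both $M_{\ell^-}$ and $M_{\ell^+}$ to be disconnected. The desired implication is precisely the contrapositive.

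No real obstacle arises. Genericity of $A$ automatically entails genericity of each of $\ell^+$ and $\ell^-$, so both endpoint polygon spaces are smooth closed manifolds and the notion of connectedness for them is unambiguous. One could alternatively read the statement off directly from the combinatorial criterion given in Corollary \ref{cor6}: whichever of the inequality (\ref{a}) or the pair (\ref{b1}), (\ref{b2}) witnesses the splitting of $M_A$ also produces, via the same arithmetic on the length data, the corresponding splitting of each of $M_{\ell^+}$ and $M_{\ell^-}$. The contrapositive route, however, is the cleanest and is what I would write up.
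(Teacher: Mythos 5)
Your argument is correct and is exactly what the paper intends: the corollary is stated without proof precisely because it is the contrapositive of the preceding corollary, specialized to the endpoint values $\ell_n = \ell_n^\pm$. Nothing further is needed.
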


One may restate Corollary \ref{cor6} in a different form:

\begin{corollary}\label{cor9} $M_A$ is disconnected if and only if there exist three indices $1\le i<j<k\le n$ such that for any $\ell_n\in [\ell_n^-, \ell_n^+]$ the pairs
$\{i,j\}$, $\{i, k\}$ and $\{j,k\}$ are long with respect to the length vector $\ell= (\ell_1, \dots, \ell_{n-1}, \ell_n)$.
\end{corollary}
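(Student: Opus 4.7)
The plan is to use Corollary \ref{cor6} and translate each of its arithmetic conditions into the triple language. Two elementary observations drive the argument. Under the ordering (\ref{usual}), being a long pair is monotone in the indices: if $\{p,q\}$ is long with respect to some length vector and $p\leq p'$, $q\leq q'$ (with $p'\ne q'$), then $\{p',q'\}$ is also long. Also, a pair not containing $n$ is hardest to be long when $\ell_n=\ell_n^+$, whereas a pair containing $n$ is hardest when $\ell_n=\ell_n^-$, so \lq\lq long for every $\ell_n\in[\ell_n^-,\ell_n^+]$\rq\rq\ reduces to a single inequality at the appropriate endpoint. Note also that the three inequalities (\ref{a}), (\ref{b1}), (\ref{b2}) are literally the statements that $\{n-3,n-2\}$ is long with respect to $\ell^+$, that $\{n-2,n-1\}$ is long with respect to $\ell^+$, and that $\{n-2,n\}$ is long with respect to $\ell^-$, respectively.

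For the direction ($\Leftarrow$), suppose Corollary \ref{cor6} provides a disconnection. If (\ref{a}) holds, take $(i,j,k)=(n-3,n-2,n-1)$: the pair $\{n-3,n-2\}$ is long for every $\ell_n\in[\ell_n^-,\ell_n^+]$ since $n$ does not appear in the triple, and monotonicity propagates this to $\{n-3,n-1\}$ and $\{n-2,n-1\}$. If (\ref{b1}) and (\ref{b2}) hold, take $(i,j,k)=(n-2,n-1,n)$: condition (\ref{b1}) makes $\{n-2,n-1\}$ long for every $\ell_n\leq\ell_n^+$, condition (\ref{b2}) makes $\{n-2,n\}$ long for every $\ell_n\geq\ell_n^-$, and monotonicity yields $\{n-1,n\}$ from $\{n-2,n\}$.

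For the direction ($\Rightarrow$), fix a witness triple $i<j<k$ and split on whether $k\leq n-1$ or $k=n$. If $k\leq n-1$, then $i\leq n-3$ and $j\leq n-2$; the pair $\{i,j\}$ is long at its worst case $\ell_n=\ell_n^+$, and monotonicity yields $\{n-3,n-2\}$ long with respect to $\ell^+$, which is (\ref{a}). If $k=n$, then $i\leq n-2$ and $j\leq n-1$; the pair $\{i,j\}$ long at $\ell_n=\ell_n^+$ yields $\{n-2,n-1\}$ long with respect to $\ell^+$, giving (\ref{b1}), while the pair $\{i,n\}$ long at its worst case $\ell_n=\ell_n^-$ yields $\{n-2,n\}$ long with respect to $\ell^-$, giving (\ref{b2}). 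The third pair $\{j,n\}$ carries no additional content. The only real work is consistently tracking, for each pair, which endpoint of $[\ell_n^-,\ell_n^+]$ is the worst case, and this is handled uniformly by the two observations above.
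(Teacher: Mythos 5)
Your proof is correct and follows essentially the same route as the paper: both reduce Corollary \ref{cor9} to Corollary \ref{cor6} by identifying the triple $(n-3,n-2,n-1)$ with inequality (\ref{a}) and the triple $(n-2,n-1,n)$ with inequalities (\ref{b1}) and (\ref{b2}). You additionally write out the converse reduction (an arbitrary witness triple forces (\ref{a}) or (\ref{b1})--(\ref{b2}) via monotonicity of long pairs in the ordered lengths and the worst-case endpoint of $[\ell_n^-,\ell_n^+]$), which the paper leaves implicit; the only blemish is that your labels $(\Rightarrow)$ and $(\Leftarrow)$ are interchanged relative to the statement.
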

\begin{proof} Indeed, in the case (a) the triple $i,j,k$ with the properties indicated above is given by $i=n-3, j=n-2, k=n-1$; in the case (b) we set $i=n-2, j=n-1, k=n$.
\end{proof}

This result is a generalization of the results of B. Jaggi \cite{J},
Theorem 4.1 of W. Lenhart and S. Whitesides \cite{LW}  and Theorem 1 from \cite{Ka1}; all results  mentioned above dealt with linkages with all legs having a fixed length.

}
\end{example}
\begin{example} \label{example3}
{\rm
Consider a two-dimensional example with both ends $M_{\ell^\pm}$ disconnected but $M_A$ connected.
Namely, let $n=4$ and $\ell_1=4$, $\ell_2=8$, $\ell_3=10$ and $\ell_4^+=12$, $\ell_4^-=1$. We see that both length vectors $(4,8,10,12)$ and $(4,8,10,1)$ determine disconnected one-dimensional manifolds $M_{\ell^+}\simeq M_{\ell^-}\simeq S^1\sqcup S^1$. Indeed, for the vector $\ell^+$ three indices $2, 3, 4$ form a \lq\lq rigid triple\rq\rq; for the vector $\ell^-$ a \lq\lq rigid triple\rq\rq\,  is formed by the indices $1, 2, 3$. Hence we see that $M_A$ is connected as the condition of Corollary \ref{cor9} is not satisfied. }
\end{example}

\begin{example}\label{example4}{\rm
In the case when $n=4$ the manifold $M_A$ has dimension two; it can be visualized as follows. Consider a planar quadrangle $ABCD$ as shown on Figure \ref{fig3}.
\begin{figure}[h]
\resizebox{5cm}{4cm} {\includegraphics[30,366][533,714]{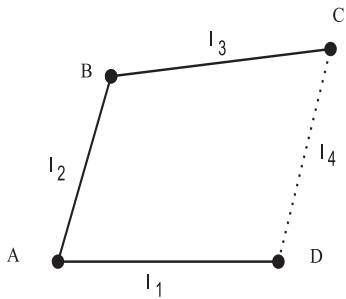}}
\caption{Variable quadrangle.}\label{fig3}
\end{figure}
The side $AD$ will remain horizontal and the side $CD$ represents the telescopic leg with its length $\ell_4$ varying between $\ell^-_4$ and $\ell^+_4$. We will assume below that
$\ell_1\le \ell_2\le \ell_3$.

First we disregard the condition that $|CD|$ should be within the interval $[\ell^-_4, \ell^+_4]$. Then we obtain that the position of the point $C$ must be within
the annulus with center at $A$ with exterior radius $R=\ell_2+\ell_3$ and interior radius $r=\ell_2-\ell_3$. Note that any internal point of this annulus is represented by exactly two configurations (which are symmetric to each other with respect to the line $AC$) while the boundary points are represented by a unique configuration of the bars $AB$ and $BC$ (since the boundary points of the annulus are achieved by collinear configurations).

Next we impose the condition that the distance $|CD|$ must satisfy $\ell_4^-\le |CD|\le \ell_4^+$. This means that $C$ must lie in another annulus with center $D$, external radius $\ell^+_4$ and internal radius $\ell^-_4$. One takes two copies of the intersection of the first and the second annuli and identifies the points lying on the boundary of the first annulus in both copies; the resulting space will be homeomorphic to $M_A$.

Consider now specifically the configuration space of the telescopic linkage with metric data as in Example \ref{example3}, i.e. $\ell_1=4$, $\ell_2=8,$ $ \ell_3=10,$ $\ell_4^-=1$, $\ell_4^+=12$. In this case the first annulus has radii $18$ and $2$ and the second annulus has radii $12$ and $1$ and the centers of the annuli are distance $4$ apart, as shown on Figure \ref{fig4}, a. On the right (Figure \ref{fig4}, b) one sees the intersection of these annuli (a disc with two disjoint small discs removed). To obtain $M_A$ one takes two copies of the intersection and glues them to each other along
boundary points of the first annulus (shown by bold on Figure \ref{fig4}.)
\begin{figure}[h]
\resizebox{6cm}{4cm} {\includegraphics[31,121][589,487]{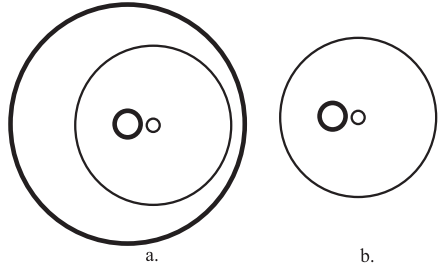}}
\caption{Two annuli (a) and their intersection (b).}\label{fig4}
\end{figure}
We obtain that in this example $M_A$ is homeomorphic to the sphere $S^2$ with four discs removed. In particular $M_A$ is connected although each of the boundary manifolds
$M_{\ell^\pm}$ is disconnected.

Let us compute in this example the numbers which appear in Theorem \ref{main}. One finds: $\alpha_0(\ell^-)=1$, $\beta_0(\ell^-, \ell^+)=0$, $\alpha_1(\ell^-)=3$, $\beta_1(\ell^+, \ell^-)=1$. Besides, $\alpha_0(\ell^+)=1$, $\beta_0(\ell^-, \ell^+)=0$, $\alpha_1(\ell^+)=1$, and $\beta_1(\ell^-, \ell^+)=0$. Thus, by Theorem \ref{main} the Betti numbers of $M_A$ are $1$ (in dimension 0) and $3$ (in dimension 1). This is consistent with our explicit description of the configuration space $M_A$ in this example.
}
\end{example}

\section{Proof of Proposition \ref{lm1} and Theorem \ref{main}}\label{proofs}

A robot arm is a simple planar mechanism consisting of several bars of fixed length connected by revolving joins as shown on Figure \ref{arm}.
\begin{figure}[h]
\resizebox{5cm}{4cm} {\includegraphics[90,368][440,632]{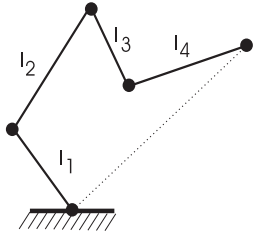}}
\caption{Robot arm.}\label{arm}
\end{figure}
We assume that there are $n-1$ bars of lengths $\ell_1\le \ell_2 \le \dots \le \ell_{n-1}$ and the initial point of the robot arm is fixed on the plane.
The space of all possible shapes of the arm
$$W=\{(u_1, \dots, u_{n-1})\in S^1 \times \dots\times  S^1\}/{\rm {SO(2)}}$$
is diffeomorphic to a torus of dimension $n-2$.

Consider the smooth function
\begin{eqnarray}
f: W \to \R, \quad f(u_1, \dots, u_{n-1}) = -\left|\sum_{i=1}^{n-1}\ell_i u_i\right|^2.
\end{eqnarray}
Geometrically, the value of $f$ equals the negative of the squared distance between the initial point of the arm to the end of the arm (shown by the dotted line on Figure \ref{arm}).

It is clear that the moduli space $M_A$ of the telescopic linkage is diffeomorphic to
the preimage $f^{-1}[a, b]$ where $$a= -(\ell^+_n)^2, \quad \mbox{and}\quad b= -(\ell^-_n)^2.$$
It is known that the critical points of $f$ are collinear configurations, see \cite{Fa2}, \cite{Fa3}.
The critical values of $f$ are of the form $-\left|\sum_{i=1}^{n-1}\ell_i \epsilon_i\right|^2$ where $\epsilon_i=\pm 1$. We obtain that $a$ and $b$ are regular values of $f$ if and only if the vectors $\ell^+=(\ell_1, \dots, \ell_{n-1}, \ell_n^+)$ and  $\ell^-=(\ell_1, \dots, \ell_{n-1}, \ell_n^-)$ are generic, i.e. they do not lie on the hyperplanes $S_J$, described in \S 2.
This implies Proposition \ref{lm1}. The orientability of $M_A$ follows from the orientability of $W$.

Next we prove Theorem \ref{main}.
We denote by $W^a=f^{-1}(-\infty, a]$ and $W^b=f^{-1}(-\infty, b]$ and $$W^{[a,b]}= f^{-1}[a,b].$$
Our goal is to compute the Betti numbers of $W^{[a,b]}\simeq M_A$.

Consider the homological exact sequence of the pair $(W^b, W^{[a,b]})$
$$\to H_{i+1}(W^b, W^{[a,b]})\to  H_i(W^{[a,b]})\to H_i(W^b) \stackrel{j_\ast}\to H_i(W^b, W^{[a,b]})\to $$
with coefficients\footnote{In this paper we will often not indicate explicitly the coefficient group understanding that it is the ring of integers $\Z$.} in $\Z$.
We may identify the relative homology as follows
$$H_i(W^b, W^{[a,b]})\simeq H_i(W^a, \partial W^a) \simeq H^{n-2-i}(W^a)\simeq \left(H_{n-2-i}(W^a)\right)^\ast.$$
Here we used the excision axiom, Poincar\'e duality and the universal coefficient theorem.
The last symbol
on the right denotes  the dual group $$\left(H_{n-2-i}(W^a)\right)^\ast = \Hom (H_{n-2-i}(W^a), \Z).$$ Note that the integral homology groups of $W^a$ and $W^b$ are free abelian, see \cite{Fa2}, \cite{Fa3}, which explains absence of the torsion term in the universal coefficient theorem.

Consider the intersection form
\begin{eqnarray}\label{intersection}
H_i(W^b) \otimes H_{n-2-i}(W^a) \to \Z
\end{eqnarray}
given geometrically by intersection of cycles in $W^b$. Note that $W^a\subset W^b$ and thus a cycle in $W^a$ can be viewed as a cycle in $W^b$.
It is well known that the
homomorphism
\begin{eqnarray}\label{dual1}H_i(W^b) \to \left( H_{n-2-i}(W^a)\right)^\ast\end{eqnarray}
associated to the bilinear form (\ref{intersection})
coincides with
\begin{eqnarray}\label{jj}
j_\ast: H_i(W^b) \to H_i(W^b, W^{[a,b]})
\end{eqnarray}
modulo the isomorphisms indicated above.

Let $k_i$ and $c_i$ denote the kernel and cokernel of the homomorphism (\ref{jj})  correspondingly. We obtain the short exact sequence
\begin{eqnarray}\label{exactseq}
0\to c_{i+1} \to H_i(M_A) \to k_i\to 0.
\end{eqnarray} It is clear that $k_i$ is free abelian and we will see below that $c_i$ is also torsion free for all $i$.
We denote by $r_i$ the rank of the intersection form
(\ref{intersection}). Then
$$r_i+\rk (k_i) = \rk (H_i(W^b)),$$
$$r_i + \rk (c_i) = \rk (H_i(W^b, W^{[a,b]}) )= \rk (H_{n-2-i}(W^a))$$
and the exact sequence (\ref{exactseq}) gives
\begin{eqnarray}\label{sum}
\begin{array}{c}\rk (H_i(M_A)) \, =\, \rk (H_i(W^{[a,b]}))  =\\   \\ \rk (H_i(W^b)) + \rk (H_{n-3-i}(W^a)) - r_i - r_{i+1}.\end{array}
\end{eqnarray}
It also follows that $H_i(M_A)$ is torsion free if and only if $c_{i+1}$ is torsion free.

Next we describe homology of the manifolds $W^a$ and $W^b$ following \cite{Fa2}, \cite{Fa3}.
For any subset $J\subset \{1, \dots, n-1\}$ consider the subset $W_J\subset W\simeq T^{n-2}$ consisting of all configuration $(u_1, \dots, u_{n-1})$ such that $u_i=u_j$ for all $i, j\in J$.
In other words, we \lq\lq freeze\rq\rq\,  all links labeled by indices in $J$ to be parallel to each other. It is clear that $W_J$ is diffeomorphic to a torus of dimension $n-1-|J|$.

The torus $W_J$ is contained in $W^a$, i.e. $W_J\subset W^a$, if and only if $J$ (viewed as a subset of $\{1, \dots, n\}$) is long with respect to $\ell^+$.
Indeed, let $p_J=(u_1, \dots, u_{n-1})$ be the configuration where $u_i=1$ for all $i\in J$ and
$u_i=-1$ for all $i\notin J$. Then the maximum of the restriction $f|W_J$ is either $0$ or $f(p_J)$, see \cite{Fa2}, Lemma 8, statement (4). The inequality $f(p_J)\le a$ is equivalent to
$\langle \ell^+, \epsilon_J\rangle >0$ which means that $J$ is long with respect to $\ell^+$.

By Lemma 9 from \cite{Fa2} the homology classes of the submanifolds $W_J$
form a basis of the homology vector space $H_i(W^a)$ where $J$ runs over all subsets $J\subset \{1, \dots, n-1\}$ of cardinality $n-1-i$ which are long with respect to $\ell^+$.
Thus using the notation introduced earlier one obtains
\begin{eqnarray}\label{15}
\rk \, H_i(W^a) = \alpha_i(\ell^+).\end{eqnarray}

Similarly, for a subset $I\subset \{1, \dots, n-1\}$ one has $W_I\subset W^b$ if and only if $I$ is long with respect to $\ell^-$. The homology $H_{i}(W^b)$ is freely generated by homology classes of all submanifolds
$W_I\subset W$ where $I$ runs over all subsets $I\subset \{1, \dots, n-1\}$ of cardinality $n-1-i$ which are long with respect to $\ell^-$. We have
\begin{eqnarray}\label{16}\rk \, (H_{i}(W^b)) = \alpha_{i}(\ell^-).\end{eqnarray}

Next we have to analyze the intersection form (\ref{intersection}) in the basis of homology given by the submanifolds $W_I$. For this purpose we represent
$H_i(W^b) $ as a direct sum
\begin{eqnarray}\label{17}
H_i(W^b) = A^b_i\oplus B^b_i \oplus C^b_i,\end{eqnarray}
described below.
The group $A^b_i$ is generated by the homology classes $[W_I]$ with those subsets $I\subset \{1, \dots, n-1\}$, $|I|=n-1-i$, which are long with respect to $\ell^-$ and such that $\hat I$ is long with respect to $\ell^+$. Here $\hat I$ denotes the subset of $\{1, \dots, n\}$ which is obtained from $I$ by removing the maximal index lying in $I$ and adding $n$.
Similarly, $B^b_i$ is generated by the homology classes $[W_I]$ with those subsets $I\subset \{1, \dots, n-1\}$, $|I|=n-1-i$, which are long with respect to $\ell^-$ and such that $n-1\in I$ and $\hat I$ is short with respect to $\ell^+$; note that in this case $\hat I$ is obtained from $I$ by deleting $n-1$ and adding $n$. Finally, $C_i^b$ is generated by the homology classes $[W_I]$ with $I\subset \{1, \dots, n-2\}$, $|I|=n-1-i$, which is long with respect to $\ell^-$ and such that $\hat I$ is short with respect to $\ell^+$.

We represent the group $H_i(W^a)$ as a direct sum in a similar fashion
\begin{eqnarray}\label{18}
H_i(W^a) = A^a_i\oplus B^a_i \oplus C^a_i,\end{eqnarray}
where $A^a_i, B^a_i, C^a_i$ are defined analogously to $A^b_i, B^b_i, C^b_i$ with
the roles of $\ell^+$ and $\ell^-$
interchanged. In more detail, $A^a_i$ is generated by the homology classes $[W_J]$ with $J\subset \{1, \dots, n-1\}$, $|J|=n-1-i$, which is long with respect to $\ell^+$ and such that $\hat J$ is long with respect to $\ell^-$.
The space $B^a_i$ is generated by the homology classes $[W_J]$ with $J\subset \{1, \dots, n-1\}$, $|J|=n-1-i$, $n-1\in J$, which are long with respect to $\ell^+$ and such that $\hat J$ is short with respect to $\ell^-$. Finally, $C_i^a$ is generated by the homology classes $[W_J]$ with $J\subset \{1, \dots, n-2\}$, $|J|=n-1-i$, long with respect to $\ell^+$ and such that $\hat J$ is short with respect to $\ell^-$.

Note that in decompositions (\ref{17}) and (\ref{18}) each of the subgroups has a specified basis which will be important in the sequel.
Counting the number elements in the basis we obtain
\begin{eqnarray}\label{19}
\rk (B_i^b) = \beta_i(\ell^+, \ell^-), \quad \rk (B_i^a) = \beta_i(\ell^-, \ell^+),
\end{eqnarray}
according to our definitions. We see that the statement of Theorem \ref{main} would follow from (\ref{dual}), (\ref{sum}), (\ref{15}), (\ref{16}), (\ref{19}) once it is shown that
the cokernel $c_i$ of the intersection form (\ref{intersection}) has no torsion and the rank of the intersection from  (\ref{intersection}) equals $\rk (B_i^b)$.

Suppose that $I\subset \{1, \dots, n-1\}$ is a subset of cardinality $n-i-1$ which is long with respect to $\ell^-$ and
$J\subset \{1, \dots, n-1\}$ is a subset of cardinality $i+1$ which is long with respect to $\ell^+$. Then the homology classes
$$[W_I]\in H_i(W^b), \quad [W_J]\in H_{n-2-i}(W^a)$$ of the submanifolds $W_I$ and $W_J$ (properly oriented) have complementary dimensions and one wants to compute their intersection via (\ref{intersection}).
By formula (33) from \cite{Fa2} 
\begin{eqnarray}\label{formula}
[W_I]\cdot [W_J]= \left\{
\begin{array}{lll}
\pm 1, & \mbox{if} & |I\cap J|=1, \\
0, & \mbox{if} & |I\cap J|>1.
\end{array}\right.
\end{eqnarray}

To make this more precise we fix orientations of $W$ and all submanifolds $W_J$ as follows.
Recall that $W$ is the quotient of $T^{n-1}$ by the diagonal action of
${\rm {SO}}(2)$. Let $e_i$ denote the unit tangent vector field on $T^{n-1}$ which is tangent to the $i$-th circle and rotates it in the positive direction, where $i=1, \dots, n-1$.
Let $e_i'$ be the image of $e_i$ under the projection $T^{n-1}\to W$. The fields $e_1', \dots, e'_{n-1}$ generate the tangent space to $W$ at every point and satisfy the relation
$e'_1+\dots+e'_{n-1}=0$.
We orient $W$ by declaring the basis $e_2',  e'_3, \dots, e'_{n-1}$ to be positive.

Consider now a subset $I\subset \{1, \dots, n-1\}$ and the corresponding submanifold $W_I$. Let $\bar I=\{i_1<i_2< \dots<i_r\}$ denote the complement of $I$, where
$r=n-1-|I|$. Then the fields $e'_{i_1}, \dots, e'_{i_r}$ form a basis of the tangent space to $W_I$ at every point and we orient $W_I$ according to the basis
$e'_{i_1}, \dots, e'_{i_r}$.

The following statement is a refinement of the first part of formula (\ref{formula}). It is presented here only of the sake of completeness as it will not be used in the proof of Theorem \ref{main}:

\begin{lemma} Suppose that $I, J \subset \{1, \dots, n-1\}$ are such that $I\cap J=\{j\}$ and $I\cup J = \{1, \dots, n-1\}$. Then, with the orientations specified as indicated above, one has
\begin{eqnarray}
[W_I]\cdot [W_J]= (-1)^{j+1}\epsilon_j(\bar I, \bar J),
\end{eqnarray}
where $\epsilon_j(\bar I, \bar J)$ denotes the sign of the permutation of the set $$\{1, \dots, n-1\}-\{j\}$$ determined by placing all elements of $\bar I$ in their natural ordering and then all elements of $\bar J$ in their natural ordering.
\end{lemma}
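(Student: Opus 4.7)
My plan is to establish the formula by showing that $W_I$ and $W_J$ meet transversally at a single point, then compute the sign by factoring the orientation comparison through a convenient intermediate basis.

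First, a point of $W_I\cap W_J$ must have $u_i$ constant on $I$ and constant on $J$; since $I\cap J=\{j\}$ these two constants agree, so $u_1=u_2=\cdots=u_{n-1}$. Modulo the diagonal ${\rm SO}(2)$-action this yields a single intersection point $p$. At $p$ one has $T_p W_I={\rm span}\{e'_k:k\in\bar I\}$ and $T_p W_J={\rm span}\{e'_k:k\in\bar J\}$, and since $\bar I\cap\bar J=\emptyset$ and $\bar I\cup\bar J=\{1,\dots,n-1\}\setminus\{j\}$ has exactly $n-2$ elements, the sum $T_pW_I+T_pW_J$ is spanned by $n-2$ of the $e'_k$, hence equals $T_pW$ (using that any $n-2$ of the $n-1$ vectors $e'_k$ form a basis thanks to the unique relation $\sum e'_k=0$). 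The intersection is therefore transverse and $[W_I]\cdot[W_J]=\pm 1$.

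The remaining task is to pin down the sign, which I would accomplish by comparing to the intermediate basis $B_j=(e'_k)_{k\in\{1,\dots,n-1\}\setminus\{j\}}$ listed in natural order. Writing $\bar I=\{i_1<\dots<i_r\}$ and $\bar J=\{k_1<\dots<k_s\}$, the oriented basis $(e'_{i_1},\dots,e'_{i_r},e'_{k_1},\dots,e'_{k_s})$ of $T_pW_I\oplus T_pW_J$ is a reordering of $B_j$ whose permutation sign is by definition $\epsilon_j(\bar I,\bar J)$. To finish, I compare $B_j$ with the reference orienting basis $(e'_2,\dots,e'_{n-1})$: for $j=1$ they coincide. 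For $j\ge 2$, a $(j-1)$-cycle moves $e'_1$ from the first slot of $B_j$ to slot $j-1$, contributing $(-1)^{j-2}$; substituting $e'_1=-(e'_2+\dots+e'_{n-1})$, the resulting basis has change-of-basis matrix that is the identity except for column $j-1$, whose entries are all $-1$, giving determinant $-1$ by Laplace expansion along that column. The two factors combine to $(-1)^{j+1}$, and multiplying by $\epsilon_j(\bar I,\bar J)$ gives the claim.

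The only real obstacle is sign bookkeeping: one must track the interaction between the ${\rm SO}(2)$-quotient (which enforces $\sum e'_k=0$), the ordering conventions for $W_I$ and $W_J$, and the chosen reference orientation of $W$, without miscounting. Once the intermediate basis $B_j$ is introduced, each contribution is localised to its own step and the sign calculation becomes routine.
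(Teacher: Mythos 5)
Your argument is correct and follows essentially the same route as the paper: reduce the intersection number to the sign of the concatenated basis $(e'_{i_1},\dots,e'_{i_r},e'_{k_1},\dots,e'_{k_s})$ relative to the reference orientation, split that sign as $\epsilon_j(\bar I,\bar J)$ times the sign $\eta_j$ of the basis obtained by deleting $e'_j$, and use the relation $e'_1+\dots+e'_{n-1}=0$ to get $\eta_j=(-1)^{j+1}$. The only difference is that you supply details the paper outsources or leaves implicit (the single transverse intersection point, which the paper cites from its reference, and the explicit determinant computation of $\eta_j$), and these details check out.
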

\begin{proof} We know from \cite{Fa3}, page 27, that the submanifolds $W_I$ and $W_J$ intersect transversally at a single point and we need to determine the sign of this intersection. Let $\bar I=\{i_1< \dots<i_r\}$ and $\bar J=\{j_1< \dots<j_s\}$ where $r=n-1-|I|$ and $s=n-1-|J|$. Note that $r+s=n-2$.
The tangent space to $W_I$ is freely generated by the vector fields $e'_{i_\alpha}$ (where $\alpha=1, \dots, r$) and the tangent space to $W_J$ is freely generated by the fields $e'_{j_\beta}$ (where $\beta=1, \dots, s$). Thus the intersection number $[W_I]\cdot [W_J]$ equals $\pm 1$ depending on whether the orientation of $W$ determined by the
the basis $e'_{i_1}, \dots, e'_{i_r}, e'_{j_1}, \dots, e'_{j_s}$ is positive or negative. Thus, we obtain that
$[W_I]\cdot [W_J]= \epsilon_j(\bar I, \bar J)\cdot \eta_j$ where $\eta_j$ denotes the sign of the base obtained from the set of vector fields $e'_1, \dots, e'_{n-1}$ by removing the field $e_j'$.
Since $e'_1+\dots+e'_{n-1}=0$ it is easy to see that $\eta_j=(-1)^{j+1}$.
\end{proof}

Consider decomposition (\ref{17}) in dimension $i$ as well as decomposition (\ref{18}) in the dual dimension $$i'=n-2-i.$$
 Suppose that $[W_I]\in A_i^b$. The intersection $[W_I]\cdot [W_J]\in \Z$ is nonzero only if $J$ is obtained from the complement of $I$ in the set $\{1, \dots, n-1\}$ by adding an element of $I$.
Can such $J$ be long with respect to $\ell^+$?
If $J$ with these properties exists then its complement $\tilde J$ in $\{1, \dots, n\}$ is short with respect to $\ell^+$. But $\tilde J$ is obtained from $I$ by removing one element and adding $n$. It follows that the set $\hat I$ obtained from $I$ by removing the largest element from $I$ and adding $n$ is also short with respect to $\ell^+$. However this is impossible according to our definition of $A_i^b$. Hence we obtain that for any $[W_I]\in A_i^b$
and for any $[W_J]\in H_{i'}(W^a)$ one has $[W_I]\cdot [W_J]=0$.

Similarly one obtains that for any $[W_J]\in A_{i'}^a$
and for any $[W_I]\in H_{i}(W^b)$ one has $[W_I]\cdot [W_J]=0$.

Consider now $[W_I]\in B_i^b$ and $[W_J]\in B_{i'}^a$. Since the sets $I$ and $J$ both contain $n-1$ the intersection $[W_I]\cdot [W_J]\not=0$ iff $I\cap J=\{n-1\}$, i.e. when $J$ is obtained
from the complement $\tilde I$ by removing $n$ and adding $n-1$. We see that given $[W_I]\in B_i^b$ there exists a unique basis element $[W_J]\in B_{i'}^a$ such that
$[W_I]\cdot [W_J]=\pm 1$. In particular, the restriction of the intersection form (\ref{intersection}) onto $B_i^b\otimes B_{i'}^a$ is nondegenerate and
$$\rk (B_i^b) = \beta_i(\ell^+, \ell^-) = \rk (B_{i'}^a)= \beta_{i'}(\ell^-, \ell^+).$$

As another remark we mention that $[W_I]\cdot [W_J]=0$ if $[W_I]\in C_i^b$ and $[W_J]\in C_{i'}^a$. Indeed in this case the sets $I, J\subset \{1, \dots, n-2\}$ must have at least two elements in common, $|I\cap J|>1$, since $|I|=n-i-1$ and $|J|= i+1$.

For each basis element $[W_I]\in C_i^b$ define
\begin{eqnarray}\label{fourier}
Y_I = [W_I] - \sum_{K}
\frac{[W_I]\cdot [W_{K'}]}{[W_K]\cdot [W_{K'}] }
\cdot [W_K]\, \in\,  H_i(W^b),
\end{eqnarray}
where $[W_K]$ runs over all basis elements of $B_i^b$ and $K'$ stands for $$K'=\tilde K -\{n\} \cup \{n-1\}.$$ In the last formula $\tilde K$ denotes the complement of $K$ in $\{1, \dots, n\}$. This class $Y_I$ has clearly the property that the intersection
\begin{eqnarray}\label{vanish}
Y_I\cdot [W_J]=0\end{eqnarray}
 is trivial for all $[W_J]\in A_{i'}^a\oplus B_{i'}^a$. Next we show that vanishing (\ref{vanish}) holds also for $[W_J]\in C^a_{i'}$.

 With this goal in mind we first rewrite formula (\ref{fourier}) retaining only nonzero terms, i.e. only terms with $|I\cap K'|=1$. We obtain that the nonzero terms in (\ref{fourier}) correspond to subsets $K$ of the form $$K=I-\{i\}\cup \{n-1\}=K_i$$ where $i\in I$.
 Assuming that $I\subset \{1, \dots, n-2\}$ is long with respect to $\ell^-$ and $\hat I$ is short with respect to $\ell^+$ one obtains that for any $i\in I$ the set $K_i$ is long with respect to $\ell^-$ and the set $\hat K_i$ is short with respect to $\ell^+$ (for obvious reasons). Thus we have
 \begin{eqnarray}\label{fourier1}
Y_I = [W_I] - \sum_{i\in I}\, \frac{[W_I]\cdot [W_{K'_i}]}{[W_{K_i}]\cdot [W_{K_i'}] } [W_{K_i}].
\end{eqnarray}

Given $[W_J]\in C_{i'}^a$ consider the intersection $Y_I\cdot [W_J] $ which equals
\begin{eqnarray*} \label{last}
\begin{array}{c} [W_I]\cdot[W_J] - \sum_{i\in I} \, \displaystyle{\frac{[W_I]\cdot [W_{K'_i}]}{[W_{K_i}]\cdot [W_{K'_i}]}}\cdot \left([W_{K_i}]\cdot [W_J]\right) =
 \\ \\  - \sum_{i\in I} \, \displaystyle{\frac{[W_I]\cdot [W_{K'_i}]}{[W_{K_i}]\cdot [W_{K'_i}]}}\cdot \left([W_{K_i}]\cdot [W_J]\right).\end{array}
 \end{eqnarray*}
If for some $i\in I$ one has $|J\cap K_i|=1$ then $|I\cap J|=2$.
Thus we obtain that if $|I\cap J|>2$ then all terms in the above formula are trivial and therefore $Y_I\cdot [W_J] =0$.

Assuming that $|J\cap J|=2$, say, $I\cap J=\{i, j\}$, we obtain that
\begin{eqnarray}
 Y_I\cdot [W_J] =-\mu_i -\mu_j,
\end{eqnarray}
where
$$\mu_i = \frac{[W_I]\cdot [W_{K'_i}]}{[W_{K_i}]\cdot [W_{K'_i}]}\cdot \left([W_{K_i}]\cdot [W_J]\right)$$
and $\mu_j$ is defined similarly with $j$ instead of $i$. We show below that $\mu_i+\mu_j=0$ and hence $Y_I\cdot [W_J] =0$ for any $[W_J]\in H_{i'}(W^a)$.

Consider the homeomorphism $T^{n-1}\to T^{n-1}$ interchanging the $i$-th and the $j$-th coordinates.
It descends to a homeomorphism $\phi: W\to W$. Since the subsets $I$ and $J$ both contain $i$ and $j$ it follows that $\phi(W_I)=W_I$ and $\phi(W_J)=W_J$.
Besides, $\phi(W_{K_i})=W_{K_j}$ and $\phi(W_{K_j})=W_{K_i}$; moreover, $\phi(W_{K'_i})=W_{K_j'}$ and $\phi(W_{K'_j})=W_{K_i'}$.

Note that $\phi$ reverses the orientation of $W$  and therefore for any two homology classes $z\in H_i(W)$, $z'\in H_{i'}(W)$ one has
\begin{eqnarray}\label{sign0}
\phi_\ast(z)\cdot \phi_\ast(z') = - z\cdot z'.
\end{eqnarray}
Besides, $\phi$ preserves the orientations of the submanifolds
$W_I$ and $W_J$ and hence
\begin{eqnarray}\phi_\ast[W_I]=[W_I], \quad \phi_\ast[W_J]=[W_J].
\end{eqnarray}

Using our convention concerning orientations of the submanifolds $W_J$ and assuming that $i<j$, one obtains
\begin{eqnarray}\label{sign1}
\phi_\ast[W_{K_i}]= (-1)^{|(i,j)\cap \bar I|}\cdot [W_{K_j}].
\end{eqnarray}
Here $|(i,j)\cap \bar I|$ is the number of integers between $i$ and $j$ which do not belong to $I$.
Similarly,
\begin{eqnarray}\label{sign2}
\phi_\ast[W_{K_j}]= (-1)^{|(i,j)\cap \bar I|}\cdot [W_{K_i}].
\end{eqnarray}
Analogously, we have
\begin{eqnarray}\label{sign3}
\begin{array}{l} \phi_\ast[W_{K'_i}]= (-1)^{|(i,j)\cap I|}\cdot [W'_{K_j}],\\  \\ \phi_\ast[W_{K'_j}]= (-1)^{|(i,j)\cap  I|}\cdot [W_{K'_j}].\end{array}
\end{eqnarray}

Therefore, using (\ref{sign0}) - (\ref{sign3}), we obtain
\begin{eqnarray*}\begin{array}{c}
\mu_i = - \displaystyle{\frac{[\phi (W_I)]\cdot [\phi (W_{K'_i})]}{[\phi (W_{K_i})]\cdot [\phi (W_{K'_i})]}}\cdot \left([\phi (W_{K_i})]\cdot [\phi (W_J)]\right)= \\ \\
- \displaystyle{\frac{[W_I]\cdot [W_{K'_j}]}{[W_{K_j}]\cdot [W_{K'_j}]}}\cdot \left([W_{K_j}]\cdot [W_J]\right) =  -\mu_j. \end{array}
\end{eqnarray*}
All signs which come from formulas (\ref{sign1}), (\ref{sign2}), (\ref{sign3}) cancel each other since each of them appears twice. Thus, $\mu_i+\mu_j=0$ and
$$Y_I\cdot [W_J]=0 \quad \mbox{for all}\quad  [W_J]\in C_{i'}^a.$$

Now we are able to complete the proof of Theorem \ref{main}. Denote by $D_i^b$ the subgroup freely generated by the homology classes $Y_I$ where the subset $I\subset \{1, \dots, n-2\}$ is such that
$[W_I]\in C_i^b$. We have a direct sum decomposition
$$H_i(W^b) = A_i^b\oplus B_i^b\oplus D_i^b$$ and the homomorphism
$$j_\ast:H_i(W^b) \to H_i(W^b, W^{[a,b]})=(H_{i'}(W^a))^\ast$$
vanishes on $A_i^b\oplus D_i^b$. However the restriction $j_\ast|B_i^b$ is a monomorphism onto a direct summand (since its composition with the projection
$\left(H_{i'}(W^a)\right)^\ast \to (B_{i'}^a)^\ast$ is an isomorphism).
We obtain that the cokernel $c_i$ of $j_\ast$ is torsion free and the rank of the image of $j_\ast$ equals $$r_i =\rk (B_i^b)=\rk (B_{i'}^a).$$

Theorem \ref{main} now follows from (\ref{dual}), (\ref{sum}), (\ref{15}), (\ref{16}), (\ref{19}). \qed 

\section{Equilateral linkage with a telescopic leg}

In this section as an illustration of Theorem \ref{main} we examine the special case when all bars of the linkage have length $1$ and the length of the telescopic leg may vary in an interval $[a, b]$ where $0<a<b$.

Using the previously introduced notations we have in this case $$\ell_1=\dots=\ell_{n-1}=1,\quad \ell_n^-=a,\quad \ell_n^+=b.$$
The metric data of the linkage is not generic if and only if either $a$ or $b$ is an integer of opposite parity to $n$.
For example, if $n$ is even then the genericity assumption is satisfied if neither $a$ nor $b$ is an odd integer. If $n$ is odd then we require that neither $a$ nor $b$ can be an even integer.

Let us compute the numbers $\alpha_k(\ell^\pm)$ and $\beta_k(\ell^\pm, \ell^\mp)$ which appear in Theorem \ref{main}.

A subset  $J\subset \{1, \dots, n-1\}$ of cardinality $n-1-k$ is long with respect to $\ell^-$ if and only if
$a<n-2k-1$. Hence we obtain
\begin{eqnarray}
\alpha_k(\ell^-)= \left\{ \begin{array}{ll}
{\binom{n-1} {k}}, & \mbox{if $a< n-2k-1$,}\\ \\
0, & \mbox{if $a\ge n-2k-1$}.
\end{array}\right.
\end{eqnarray}

Similarly, one computes explicitly the numbers $\beta_k(\ell^+, \ell^-)$. Simple analysis shows that $\beta_k(\ell^+, \ell^-)$ and $\beta_k(\ell^-, \ell^+)$
can be nonzero only in the case when
$n$ is even, $n=2r+2$, and $k=r$, i.e. when one considers the middle dimensional homology. In this case one has
\begin{eqnarray}\qquad
\beta_r(\ell^+,\ell^-)= \beta_r(\ell^-,\ell^+) \, =\, \left\{ \begin{array}{ll}
{\binom{2r} {r}}, & \mbox{if $a< 1$ and $b<1$,}\\ \\
0, & \mbox{otherwise}.
\end{array}\right.
\end{eqnarray}

Thus for $2k<n-4$ one has
\begin{eqnarray}
\rk H_k(M_A) = \left\{
\begin{array}{ll}
{\binom {n-1} k}, & \mbox{if $a<n-2k-1$},\\ \\
0, &\mbox{if $a>n-2k-1$}.
\end{array}\right.
\end{eqnarray}
Hence homology in low dimension does not depend on the value of the parameter $b$. Similarly one obtains that for $2k>n-2$ the $k$-dimensional Betti number equals
\begin{eqnarray}
\rk H_k(M_A) = \left\{
\begin{array}{ll}
{\binom {n-1} {k+2}}, & \mbox{if $b<2k-n +5$},\\ \\
0, &\mbox{if $b> 2k-n+5$}.
\end{array}\right.
\end{eqnarray}

It remains to calculate the Betti numbers in the middle dimension, i.e. for $n-2k$ equal $2, 3, 4$.

For $n-2k=2$ or $n-2k=3$ we have $\beta_{k+1}(l^{+},l^{-})=0$. In the first case $n-2k=2$ we find
$$ \rk H_{k}(M_{A}) =
\begin{cases} \binom{n-1}{k}+\binom{n-1}{k+2}-\binom{n-2}{k}, & \mbox{if } a<1,\, \, b<1, \\
\binom{n-1}{k}+\binom{n-1}{k+2}, & \mbox{if } 1<b<3,\, \, a<1, \\
\binom{n-1}{k+2},& \mbox{if} 1<b<3, \, \, 1<a,\\
\binom{n-1}{k}, & \mbox{if } b>3,\, \,  a<1, \\
0, & \mbox{if } b>3, \, \, 1< a.\end{cases}$$
In the case $n-2k=3$ we have $\beta_{k}(l^{+},l^{-})=0$ and thus
$$\rk H_{k}(M_{A}) =
\begin{cases}
\binom{n-1}{k}+\binom{n-1}{k+2}, & \mbox{if } b<2, \\
\binom{n-1}{k}, & \mbox{if } a<2<b \\ 0, & \mbox{if } a,b>2. \end{cases} $$

Finally, let us consider the case $n-2k=4$. Here we have $\beta_{k}(l^{+},l^{-})=0$. If $b<1$ then $\beta_{k+1}(l^{+},l^{-})=\beta_{n-k-3}(l^{-},l^{+})$ is non-zero and
we have
$$\rk H_{k}(M_{A})= \binom{n-1}{k}+\binom{n-1}{k+2}-\binom{n-2}{k+1}.$$
For $n-2k=4$ and $b>1$ we have
$$\rk H_{k}(M_{A}) =
\begin{cases} \binom{n-1}{k}, & \mbox{if } a<3, \\ 0, & \mbox{if } a>3.  \end{cases} $$

This can be compared with the Betti numbers of equilateral linkages with no telescopic leg, see \cite{Fa2}, \cite{Fa3}.

\section{The disconnected case}

In this section we prove the following statement which is a generalization of a result of M. Kapovich and J. Millson \cite{Ka1} who dealt with non-telescopic linkages.

\begin{proposition}\label{prop11}
If $M_A$ is disconnected then it is diffeomorphic to the product
$$[0,1]\times (T^{n-3}\sqcup T^{n-3})$$
of the interval $[0,1]$ and the disjoint union of two copies of ${(n-3)}$-dimensional torus $T^{n-3}$.
\end{proposition}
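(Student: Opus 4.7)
The plan is to invoke Corollary \ref{cor9}: disconnectedness of $M_A$ supplies a rigid triple of indices $1\le i<j<k\le n$, each pair $\{i,j\}$, $\{i,k\}$, $\{j,k\}$ being long for every length vector with $\ell_n\in[\ell_n^-,\ell_n^+]$. By Corollary \ref{cor6} we may assume this triple is either $\{n-3,n-2,n-1\}$ (case (a)) or $\{n-2,n-1,n\}$ (case (b)); in both cases it forms a chain of three consecutive edges of the polygon, and the remaining $n-3$ edges form a single short arc.

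First I would define a chirality invariant $\chi : M_A \to \{+,-\}$ associated to the three-bar sub-chain $\{i,j,k\}$. The pair-long inequalities prevent this sub-chain from ever becoming collinear throughout the telescopic interval, so its two intermediate vertices always lie on a well-defined side of the chord joining its endpoints; the sign of, say, $\operatorname{Im}(\bar u_j u_k)$ gives such a continuous locally constant $\chi$ and hence a decomposition $M_A = M_A^+ \sqcup M_A^-$.

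Next I would construct an explicit diffeomorphism $M_A^\pm \to T^{n-3}\times [\ell_n^-,\ell_n^+]$. Using the $SO(2)$ factor of $E(2)$ to normalize $u_i=1$, the closure condition $\sum_m \ell_m u_m = 0$ becomes
\[
\ell_j u_j + \ell_k u_k \;=\; w \;:=\; -\ell_i \,-\!\!\sum_{m\notin\{i,j,k\}}\!\!\ell_m u_m,
\]
where the right-hand side is determined by the directions $u_m\in S^1$ of the $n-3$ short-arc bars together with the telescopic length $\ell_n$ (the latter entering as a coefficient in the sum in case (a), and as one of the two circle radii in the left-hand side in case (b)). For $|w|$ strictly inside the open annulus $\bigl(|\ell_j-\ell_k|,\,\ell_j+\ell_k\bigr)$, this equation has precisely two solutions (the two chirality classes), and specifying $\chi$ selects one smoothly; this yields mutually inverse smooth maps between $M_A^\pm$ and $(S^1)^{n-3}\times[\ell_n^-,\ell_n^+] \cong T^{n-3}\times[0,1]$.

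The main obstacle is establishing that $|w|$ does stay strictly inside the open annulus $\bigl(|\ell_j-\ell_k|,\,\ell_j+\ell_k\bigr)$ for every admissible choice of the free parameters. The upper bound $|w|<\ell_j+\ell_k$ is an immediate consequence of the pair $\{j,k\}$ being long. The lower bound $|w|>|\ell_j-\ell_k|$ reduces, via the estimate $|w|\ge \ell_i - R$ where $R=\sum_{m\notin\{i,j,k\}}\ell_m$ is the total short-arc length (treated as a function of $\ell_n$ in case (b)), to the inequality $\ell_i > R + |\ell_j-\ell_k|$; splitting according to the sign of $\ell_j-\ell_k$ and, in case (b), according to whether $\ell_n \gtrless \ell_{n-1}$, one sees that this is precisely one of the two remaining pair-long inequalities of the rigid triple. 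Once these strict inequalities are verified, the diffeomorphisms $M_A^\pm\cong T^{n-3}\times[0,1]$ are established, and the proposition follows.
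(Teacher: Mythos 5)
Your proposal is correct, but it takes a genuinely different route from the paper. The paper also starts from Corollary \ref{cor9}, but then splits into two cases and reduces each to previously established machinery: when the rigid triple is $\{n-3,n-2,n-1\}$ it verifies inequality (\ref{ineq3}) so that Proposition \ref{cor2} gives $M_A\cong M_{\ell^-}\times[0,1]$, and then quotes Kapovich--Millson for $M_{\ell^-}\cong T^{n-3}\sqcup T^{n-3}$; when the triple is $\{n-2,n-1,n\}$ it verifies the hypothesis of Lemma \ref{lemma12}, splits off the shortest leg as an $S^1$ factor, and concludes by induction on $n$. You instead treat both cases uniformly by carrying out the Kapovich--Millson-type argument directly in the telescopic setting: the chirality sign $\operatorname{Im}(\bar u_ju_k)$ is well defined because the two circles of radii $\ell_j$ and $\ell_k$ always meet transversally, and your verification that $|w|$ stays in the open annulus is right --- the upper bound is exactly ``$\{j,k\}$ long'' and the lower bound is exactly ``$\{i,j\}$ long'' or ``$\{i,k\}$ long'' according to the sign of $\ell_j-\ell_k$ (with $\ell_k=\ell_n$ variable in case (b)), all of which Corollary \ref{cor9} supplies for every admissible $\ell_n$. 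What each approach buys: the paper's argument is shorter because it leans on Proposition \ref{cor2}, Lemma \ref{lemma12} and the known non-telescopic classification, at the price of an induction and an external citation; yours is self-contained and produces the diffeomorphism explicitly, at the price of redoing the two-circle analysis and of some routine details you leave implicit (smoothness of the selected intersection point via the implicit function theorem, the observation that disconnectedness forces both chirality classes to be nonempty, and the boundary behaviour at $\ell_n=\ell_n^{\pm}$). None of these gaps is substantive.
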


First we prove an analogue of Corollary \ref{cor2} from section \ref{sec2} involving a small non-telescopic leg.
Results of this type are known for the usual (non-telescopic) linkages (J.-Cl. Hausmann).

\begin{lemma} \label{lemma12} Consider a planar linkage with a telescopic leg which has generic metric data $A$ given by $\ell_1\le \dots \le \ell_{n-1}$ and $0<\ell^-_n<\ell_n^+$.
Suppose that $\ell_1>0$ is so small that the following is true: for $\ell_n=\ell_n^\pm$ and for any choice of $\epsilon_2=\pm 1, \dots, \epsilon_n=\pm 1$ such that
$$\sum_{i=2}^n\epsilon_i\ell_i >0$$ one has $$\sum_{i=2}^n\epsilon_i\ell_i >\ell_1.$$ Then $M_A$ is diffeomorphic to $$M_{A'}\times S^1,$$ where
$A'$ is the metric data of the linkage having $n-2$ legs of fixed lengths $\ell_2\le \dots\le \ell_{n-1}$ and a telescopic leg of length varying in the interval $[\ell_n^-, \ell_n^+]$.
\end{lemma}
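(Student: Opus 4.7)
The plan is to realize $M_A$ as a sublevel set of the Morse function introduced in the proof of Theorem~\ref{main} and to continuously shrink the short leg of length $\ell_1$ to zero, showing that the corresponding sublevel set is diffeomorphic to its limit $M_{A'}\times S^1$.

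Set $a=-(\ell_n^+)^2$, $b=-(\ell_n^-)^2$, and recall that $M_A=f^{-1}[a,b]$ for the function $f(u_1,\dots,u_{n-1})=-|\sum_{i=1}^{n-1}\ell_iu_i|^2$ on the robot-arm space $W=(S^1)^{n-1}/SO(2)$. I would introduce the one-parameter family of smooth functions
\[
f_t(u_1,\dots,u_{n-1})=-\Bigl|t\ell_1 u_1+\sum_{i=2}^{n-1}\ell_iu_i\Bigr|^2,\qquad t\in[0,1],
\]
so that $f_1=f$ while $f_0$ is independent of $u_1$. The map
\[
\Phi\colon W\longrightarrow W'\times S^1,\qquad [u_1,u_2,\dots,u_{n-1}]\longmapsto\bigl([u_2,\dots,u_{n-1}],\,u_1\bar u_2\bigr),
\]
where $W'=(S^1)^{n-2}/SO(2)$ is the robot-arm space of the linkage $A'$, is a well-defined diffeomorphism under which $f_0$ pulls back from the analogous Morse function $f'$ on $W'$ used to construct $M_{A'}$. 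Consequently $f_0^{-1}[a,b]\cong(f')^{-1}[a,b]\times S^1=M_{A'}\times S^1$.

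The heart of the argument is to verify that $a$ and $b$ are regular values of $f_t$ for every $t\in[0,1]$. A direct differentiation shows that the critical points of $f_t$ are the collinear configurations, with critical values $-\bigl(\epsilon_1 t\ell_1+\sum_{i=2}^{n-1}\epsilon_i\ell_i\bigr)^2$ for $\epsilon_i=\pm1$. A coincidence of such a value with $a$ or $b$ is equivalent, upon choosing $\epsilon_n$ appropriately and setting $\ell_n=\ell_n^\pm$, to the equality $\bigl|\sum_{i=2}^n\epsilon_i\ell_i\bigr|=t\ell_1$. Applying the hypothesis of the lemma to both $\epsilon$ and $-\epsilon$ yields $\bigl|\sum_{i=2}^n\epsilon_i\ell_i\bigr|>\ell_1\ge t\ell_1$ for every non-zero value of the sum, ruling out any such coincidence on $[0,1]$.

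I would then conclude by applying Ehresmann's fibration theorem to the compact manifold with corners $E=\{(t,x)\in[0,1]\times W:\,a\le f_t(x)\le b\}$. Non-vanishing of $d_xf_t$ on the faces $\{f_t=a\}$ and $\{f_t=b\}$ allows the vector field $\partial_t$ on $[0,1]$ to be lifted tangentially along these faces, so the projection $\pi\colon E\to[0,1]$ is a proper submersion in the category of manifolds with corners, hence a trivial fibration over the contractible base. Restricting to the endpoints yields $M_A=f_1^{-1}[a,b]\cong f_0^{-1}[a,b]\cong M_{A'}\times S^1$. The main technical hurdle is the regular-value step: the one-sided form of the hypothesis must be combined with the genericity of $A'$ (implicit in ``$\ell_1$ small enough'') to produce the two-sided lower bound $|\sum_{i=2}^n\epsilon_i\ell_i|>\ell_1$; once that is in place, the Ehresmann step is routine.
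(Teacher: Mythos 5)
Your proof is correct, but it takes a genuinely different route from the paper's. The paper works downstairs in the target plane: it forms $V=T^{n-2}\times[\ell_n^-,\ell_n^+]$ with the endpoint map $g(u_3,\dots,u_n,\ell_n)=\sum_{i=2}^n\ell_iu_i$ (the direction $u_2$ frozen along the $x$-axis), identifies $M_{A'}=g^{-1}(0)$ and $M_A=g^{-1}(C)$ where $C$ is the circle of radius $\ell_1$ about the origin, and shows that $g$ is a submersion (respecting the two boundary faces of $V$) over the entire disk bounded by $C$, so that $g^{-1}(C)\cong g^{-1}(0)\times C$. You instead deform the length vector, shrinking $\ell_1$ to $t\ell_1$ inside the Morse function $f$ on $W$ and running a parametrized regular-value/Ehresmann argument over $t\in[0,1]$; this is closer in spirit to the wall-crossing argument of Proposition \ref{cor2} than to the paper's fibration over the disk. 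Both arguments turn on exactly the same numerical input --- the critical values of $g|\partial_\pm V$ sit on the $x$-axis at distance $\bigl|\sum_{i=2}^n\epsilon_i\ell_i\bigr|$ from the origin, which is precisely the quantity you need to exceed $t\ell_1$ --- so neither buys extra generality; yours avoids introducing the auxiliary space $V$ and the circle factor appears transparently through your diffeomorphism $\Phi$, but you pay for it with the manifold-with-corners bookkeeping in the Ehresmann step, which the paper's version of the argument keeps to a minimum. One point you rightly flag, and which is equally present (and equally unaddressed) in the paper's proof: the hypothesis only controls those sums $\sum_{i=2}^n\epsilon_i\ell_i$ that are positive, so a vanishing sum (i.e.\ non-generic $A'$) is not literally excluded, and such a sum would place a critical value of $g|\partial_\pm V$ at the centre of the disk (respectively, make $a$ or $b$ a critical value of your $f_0$), breaking both arguments. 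This never happens in the only place the lemma is invoked, namely case (b) of the proof of Proposition \ref{prop11}, where every subset of $\{2,\dots,n\}$ is strictly long or strictly short; it would nevertheless be cleaner to add genericity of $A'$ to the hypotheses, as you in effect suggest.
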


\begin{proof} Let $V=T^{n-2}\times [\ell_n^-, \ell_n^+]$ denote the product of a torus of dimension $n-2$ and of interval. The points of $V$ are of the form
$(u_3, \dots, u_n, \ell_n)$ where $u_3, \dots, u_n\in S^1\subset \R^2$ are unit vectors on the plane and $\ell_n$ is a number which belongs to the interval $[\ell_n^-, \ell_n^+]$.
Consider a smooth map $g: V\to \R^2$ given by
\begin{eqnarray}\label{gg}
g(u_3, \dots, u_n, \ell_n) = \sum_{i=2}^n \ell_i u_i\in \R^2;
\end{eqnarray}
in this formula $u_2$ denotes the unit vector pointing in the direction of the $x$-axis. Note that $g^{-1}(0)$ coincides with the configuration space $M_{A'}$ of the telescopic linkage with sides
$\ell_2, \dots, \ell_{n-1}$ and with telescopic leg with parameters $0<\ell_n^-<\ell_n^+$.

Now, let $C\subset \R^2$ denote the circle with center at the origin and with radius $\ell_1$. Then the preimage $g^{-1}(C)$ is the configuration space $M_A$.
\begin{figure}[h]
\resizebox{4cm}{5cm} {\includegraphics[13,181][544,799]{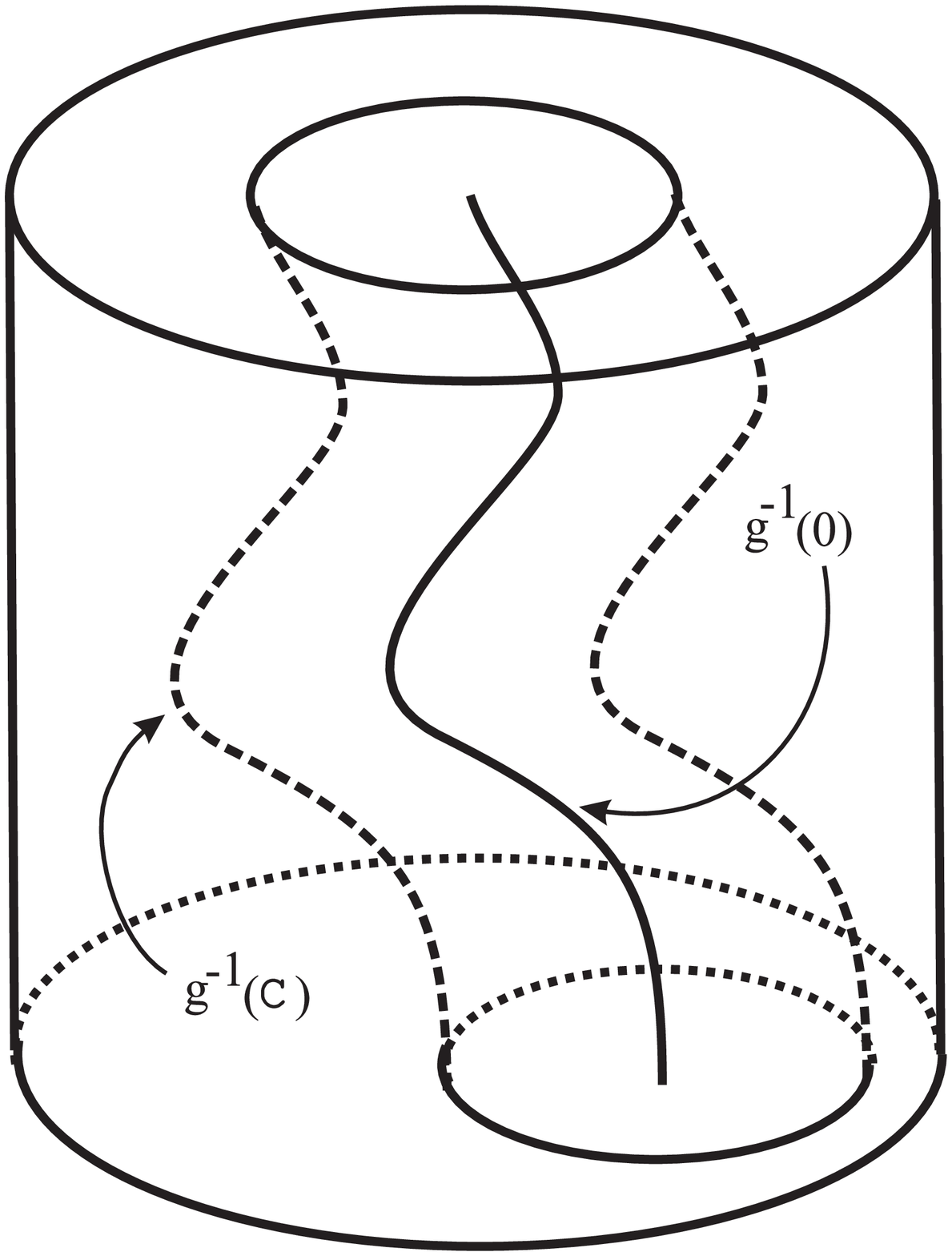}}
\caption{Manifold $V$ and submanifolds $g^{-1}(0)$ and $g^{-1}(C)$.}\label{vvv}
\end{figure}

Note that $g$ is a submersion as already the last summand in (\ref{gg}) has surjective differential. The boundary $\partial V$ has two components $\partial_-V= T^{n-2}\times \ell_n^-$ and
$\partial_+V= T^{n-2}\times \ell_n^+$.
The critical points of the restriction $g|(\partial _\pm V)$ are collinear configurations lying entirely in the $x$-axis. Our assumption on $\ell_1$ guarantees that the image of any of the critical points of $g|(\partial_\pm V)$ lies outside the circle $C$. Thus we see that $g$ is a submersion over the disk bounded by $C$ and therefore
$g^{-1}(C) $ is diffeomorphic to $g^{-1}(0)\times C$, see Figure \ref{vvv}. This completes the proof.
\end{proof}

\begin{proof}[Proof of Proposition \ref{prop11}] Assume that the metric data $A$ is given by the numbers $\ell_1\le \dots\le \ell_{n-1}$ and $0<\ell_n^-<\ell_n^+$. If $M_A$ is disconnected we may apply Corollary \ref{cor9} asserting that there exist three indices $1\le i<j<k\le n$ such that the three pairs $\{i,j\}$, $\{i, k\}$ and $\{j,k\}$ are long with respect to $(\ell_1, \dots, \ell_{n-1}, \ell_n)$ for any $\ell_n\in [\ell_n^-, \ell_n^+]$.

There are two possibilities: either (a) the triple $\{i,j,k\}$ does not contain $n$, the index of the telescopic leg, or (b) $n=k$.

Consider first the case (a). Then obviously $i=n-3$, $j=n-2$ and $k=n-1$. Let us show that we may apply Proposition \ref{cor2}. Indeed, a subset $J\subset \{1, \dots, n\}$ is long with respect to $\ell^-$ if and only if it contains at least two of the indices $\{n-3, n-2, n-1\}$. In particular, for a subset $J$ the property of being short or long with respect to
$\ell^-$ does not depend on whether $J$ contains elements $i<n-3$. We trivially obtain
$$[\ell^-]=\ell_{n-2}+\ell_{n-3} - \ell_1-\dots-\ell_{n-4} - \ell_{n-1}-\ell_n^-;$$
see (\ref{five}) for the notation $[\ell^-]$. We see that inequality (\ref{ineq3}) is equivalent to
$$\ell_{n-2}+\ell_{n-3} - \ell_1-\dots-\ell_{n-4} - \ell_{n-1}-\ell_n^+>0,$$
which is valid since $\{n-2, n-3\}$ is long with respect to $\ell^+.$ By Proposition \ref{cor2} we have $M_A \simeq M_{\ell^-}\times [0,1]$ and clearly $M_{\ell^-}$ is disconnected. Now we may refer to \cite{Ka1} for the statement that $M_{\ell'}$ is diffeomorphic to $T^{n-3}\sqcup T^{n-3}$ and Proposition \ref{prop11} follows.

Consider now the case (b). Then $i=n-2$, $j=n-1$, $k=n$. Proposition \ref{prop11} is trivial for $n=3$ hence we will assume that $n>3$. A subset $J\subset \{1, \dots, n\}$ is long with respect to $(\ell_1, \dots, \ell_{n-1}, \ell_n)$ where $\ell_n^-\le \ell_n\le \ell_n^+)$ if and only if it contains at least two indices out of $\{n-2, n-1, n\}$.
Again, the property of a subset $J$ to be short or long with respect to
$\ell$ does not depend on whether $J$ contains elements which are less than $n-3$.
Hence
$\sum_{i=2}^n\epsilon_i\ell_i>0$ implies 
$ \sum_{i=2}^n\epsilon_i\ell_i>\ell_1$. We see that Lemma \ref{lemma12} is applicable and $M_A$ is diffeomorphic to the product $M_{A'}\times S^1$ where $A'$ is the metric data of a linkage with legs of fixed lengths $\ell_2\le \dots\le \ell_{n-1}$ and with a telescopic leg with parameters $\ell_n^-<\ell_n^+.$ Proposition \ref{prop11} now follows by induction as $M_{A'}$ must be disconnected.

\end{proof}

\vspace{2ex}

\end{document}